\newtheorem{theorem}{Theorem}[section]
\newtheorem{lemma}[theorem]{Lemma}
\newtheorem{definition}[theorem]{Definition}
\newtheorem{corollary}[theorem]{Corollary}
\newtheorem{proposition}[theorem]{Proposition}
\DeclareMathAlphabet{\mathpzc}{OT1}{pzc}{m}{it}
\newcommand{\N}{\mathbb{N}}
\def\eps{\varepsilon}
\def\U{\mathcal{U}}
\def\L{\mathcal{L}}
\def\X{{\rm \bf X}}
\def\Z{{\rm \bf Z}}
\def\x{{\rm \bf x}}
\def\a{{\rm \bf a}}
\def\j{{\rm \bf j}}
\def\B2star{\overline{B}_X^{w(X^{\ast\ast},X^{\ast})}}
\title{A Lindenstrauss theorem for some classes of multilinear mappings}
\author{Daniel Carando %
\and Silvia Lassalle %
\and Martin Mazzitelli}
\thanks{This project was supported in part by CONICET PIP 0624, PICT 2011-1456 and UBACyT 1-746}
\address{Departamento de Matem\'{a}tica - Pab I,
Facultad de Cs. Exactas y Naturales, Universidad de Buenos Aires,
(1428) Buenos Aires, Argentina and IMAS-CONICET}
\email{dcarando@dm.uba.ar,  mmazzite@dm.uba.ar}
\address{Departamento de Matem\'{a}tica, Universidad de San
Andr\'{e}s, Vito Dumas 284, (B1644BID) Victoria, Buenos Aires,
Argentina and IMAS - CONICET.}
\email{slassall@udesa.edu.ar}
\keywords{integral formula, norm attaining multilinear mappings and polynomials, Lindentrauss-type
theorems}
\subjclass[2010] {Primary: 46G25, 47H60. Secondary: 46B28, 46B20}
\date{}
\begin{document}
\baselineskip=.65cm

\maketitle

\centerline{\small{{\emph {Dedicated to Professor Richard Aron on the occasion of his 70th birthday}}} }

\begin{abstract}
Under some natural hypotheses, we show that if a multilinear mapping belongs to some Banach
multlinear ideal, then it can be approximated by multilinear mappings \emph{belonging to the same ideal}
all whose Arens extensions attain their norms at the same point. We prove a similar result for the class of symmetric
multilinear mappings. We see that the quantitative (Bollob\'as-type) version of these results fails in every multilinear ideal.

\end{abstract}

\section*{Introduction}
The Bishop-Phelps theorem \cite{BishPhel61,BishPhel63} is an elementary and significant result about
continuous linear functionals and convex sets. The most quoted version asserts that the set of
linear functionals in $X'$ (the dual space of a Banach space $X$) which attain their supremum on
the unit ball of $X$ is norm-dense in $X'$. Lindenstrauss showed that this is not true, in general,
for linear bounded operators between two Banach spaces $X$ and $Y$~\cite{Lind}; while he
proved that the set of bounded linear operators whose second adjoints attain their norm, is always
dense in the space of all bounded operators. This result was later extended for multilinear
mappings by Acosta, Garc\'{\i}a and Maestre \cite{AGM06}. These kinds of results are referred to as
Lindenstrauss-type theorems.

Regarding multilinear mappings, the Bishop-Phelps theorem fails, in general, even for scalar-valued
bilinear forms \cite{AAP, Choi97}. In order to handle the study of Lindenstrauss-type multilinear
results, the Arens extensions come into scene \cite{Arens}. The first result in this setting was
given by Acosta \cite{Aco98} who proved that the set of bilinear forms on a product of two
Banach spaces $X$ and $Y$ such that their Arens extensions are norm attaining is dense
in the space of bilinear forms $\L(^2X\times Y)$.  Aron, Garc\'{\i}a and Maestre \cite{ArGM}
obtained an improvement by showing that the set of those mappings in $\L(^2 X\times Y)$ such that
the two possible Arens extensions attain the norm at the same element of $X\times Y$,  is dense in
$\L(^2 X\times Y)$. This last result is stronger than the previous one since there exist  bilinear
mappings such that only one of their Arens extensions attains the norm \cite{ArGM}. In  \cite{AGM06},
the authors prove that the strongest version holds with full generality for multilinear mappings.
They also give several positive results of the kind for some multilinear ideals considering  the
ideal norm instead of the supremum norm.

In this paper, we show that a multilinear Lindenstrauss theorem holds for any ideal of 3-linear
forms and, as a consequence, we obtain the same result for any regular ideal of bilinear mappings. More generally, we
prove in Theorem \ref{Un estable} that an $N$-linear Lindenstrauss theorem holds for any  $N\in \mathbb N$, for a wide class of
multilinear ideals which preserve some algebraic structure related to multiplicativity (we say these ideals are stable). Our
results include the classes of nuclear, integral, extendible, multiple $p$-summing mappings ($1\le
p <\infty$). Also, if we consider multilinear mappings on Hilbert spaces, the class of
Hilbert-Schmidt and, more generally, the multilinear Schatten classes are encompassed.  It was observed in \cite{AGM06} that any ideal which is dual to an associative tensor norm satisfies the multilinear Lindenstrauss theorem. These ideals are easily seen to be stable, but the converse is not true: the ideal of multiple 2-summing mappings is stable, and we show  in Proposition~\ref{noesasociativa} that it cannot be dual to any associative tensor norm. Our list of
examples then extends and completes the multilinear ideals treated in~\cite{AGM06}.

In \cite{CaLaMa12} we give an integral representation formula for the duality between
tensor products and polynomials on Banach spaces satisfying appropriate hypotheses. Here, we extend
\cite[Theorem~2.2]{CaLaMa12} for the duality between tensor products and multilinear mappings.  As a consequence, we obtain in Theorem \ref{Lind simetrico} a  Lindenstrauss-type theorem for the space of symmetric multilinear
mappings $\mathcal{L}_s(^NX;Z)$, whenever $X$ has separable dual with the approximation property and $Z$ is a dual space or a Banach space with the property $(\beta)$ of Lindenstrauss. We also provide examples of Banach spaces for which a Bishop-Phelps-type theorem fails, but our Lindenstrauss theorem holds.

Finally, we address quantitative versions of the Bishop-Phelps and Lindenstrauss theorems in ideals of multilinear mappings. In \cite{Boll}, Bollob\'as improved the theorem of Bishop and Phelps showing that it is possible not only to approximate linear functionals by norm-attaining ones, but also to `somehow' choose  the elements where the norm is attained. In the last years, a lot of attention has been paid to Bishop-Phelps-Bollob\'as-type results in the linear, multilinear and polynomial context. We show that the corresponding quantitative version of the Lindenstrauss theorem fails for every ideal of multilinear mappings. On the other hand, we show the Bishop-Phelps-Bollob\'as theorem for any ideal of multilinear mappings defined on a product of uniformly convex Banach spaces. This extends, to the ideal setting, some results obtained in \cite{Acoet6, ABGM, KimLee}.

\section{Lindenstrauss theorem in multilinear ideals} \label{Lindenstrauss en ideales}

Let us fix some notation. Throughout this paper $X$ and $Y$ denote Banach spaces, while $X'$ and
$B_X$ denote respectively the topological dual and the closed unit ball of $X$. For Banach spaces
$X_1, \ldots, X_N$ we denote the product space $X_{1} \times \cdots \times X_{N}$ by
$\X$ and by $\L(^N\X;Y)$ the space of continuous $N$-linear
mappings $\Phi \colon \X\to Y$ endowed with the supremum norm. Recall that the {\it Arens
extensions of a multilinear function} are obtained by weak-star density. Each extension depends on
the order  in
which the variables are extended.  Here we present one of the $N!$ possible extensions (see
\cite{Arens} and \cite[1.8]{DefFlo93}).
Given $\Phi \in \L(^N\X;Y)$, the mapping $\overline{\Phi} : X''_1
\times\cdots\times X''_N \longrightarrow Y''$ is defined by
\begin{eqnarray}\label{arens extension}
\overline{\Phi}(x_1'',\ldots,x_N'') = w^* - \lim_{\alpha_1}\ldots\lim_{\alpha_N}
\Phi(x_{1,{\alpha_1}},\ldots,x_{N,{\alpha_N}})
\end{eqnarray}
where $(x_{j,{\alpha_j}})_{\alpha_j } \subseteq X$ is a (bounded) net  $w^*$-convergent to  $x''_j\in X''_j$,
$j=1,\ldots, N$. For $N=1$ this recovers the definition of the bitranspose of a continuous
operator.
Now, we recall the definition of a multilinear ideal.

\begin{definition}
A normed ideal of $N$-linear mappings is a pair $\left( \U , \Vert
\cdot \Vert_{\U}\right)$ such that for any $N$-tuple of Banach
spaces $\X=X_1\times\cdots\times X_N$ satisfies
\begin{enumerate}[\upshape (i)]

\item $\U(\X ; Y) = \U \cap \mathcal{L}(^N \X;Y)$ is a linear subspace of
$\mathcal{L}(^N \X;Y)$  for any Banach space $Y$ and $\Vert \cdot
\Vert_{\U}$ is a norm on it.

\item For any $N$-tuple of Banach spaces $\Z=Z_1\times\cdots\times
Z_N$, any Banach space $W$ and operators $T_i \in
\mathcal{L}(Z_i;X_i)$, $1 \leq i \leq N$, $S\in \mathcal{L}(Y;W)$
and $\Phi \in \U(\X; Y)$, the $N$-linear mapping $S \circ \Phi \circ (T_1,
\dots, T_n)\colon \Z\longrightarrow W$ given by
$$
S \circ \Phi \circ (T_1, \dots, T_N)(z_1, \dots, z_N) = S(\Phi(T_1(z_1),
\dots, T_n(z_N)))
$$
belongs to $\U(\Z; W)$ with $\Vert S \circ \Phi \circ (T_1, \dots, T_N)
\Vert_{\U} \leq \| S\|  \| \Phi\|_{\U} \| T_1\| \dots \| T_N\|.$

\item $(z_1, \dots, z_N) \mapsto z_1 \cdots z_N$ belongs to $\U( \mathbb{C}^N ; \mathbb{C})$ and has
norm one.
\end{enumerate}
If  $\left( \U(\X,Y) , \Vert \cdot \Vert_{\U}\right)$ is complete for every $\X$ and $Y$,
we say that  $\left( \U , \Vert \cdot \Vert_{\U}\right)$ is a Banach
ideal of $N$-linear mappings. In the scalar-valued case, we simply write $\U(\X)$.
\end{definition}

For $N \in \mathbb N$ and
$\j=\{ j_1, \ldots, j_p \}$ a subset of the initial set
$\{1,\ldots, N\}$, $j_1<j_2 <\cdots < j_p$, we define $P_\j\colon
\X\to \X$ the projection given by
\begin{equation*}
P_\j(x_{1}, \dots, x_{N}) := (y_1, \dots, y_N), \quad \text{where}
\quad y_k = \left \{
\begin{array} {lcl}
x_k & \text{if} & k \in  \j\\
0 & \text{if} & k \notin \j
\end{array}
\right..
\end{equation*}
If $\j^c$ denotes the complement of $\j$ in $\{1, \dots,
N\}$, then $P_\j + P_{\j^c}=Id$, the identity  map on $\X$.

Let us define a rather natural property for multilinear ideals which will ensure the validity of a
Lindenstrauss-type theorem. Let us say that the ideal of $N$-linear forms $\U$ is {\it stable
 at $\X$}
if there exists $K>0$ such that for all  $\a=(a_1, \ldots, a_N)\in\X$ and all $\j\subset \{1,\ldots,
N\}$ , the function $V_{\j, \a}\colon \L(^N\X) \to \L(^N\X)$
defined by
\begin{equation} \label{definicion de Vj}
V_{\j, \a}(\Phi)(\x) = \Phi(P_\j(\x) + P_{\j^c}(\a)) \Phi(P_\j(\a) +
P_{\j^c}(\x))
\end{equation}
satisfies
\begin{equation} \label{propiedad de Vj}
V_{\j, \a}(\Phi) \in \U(\X)\quad  \text{for all}\  \Phi \in \U(\X)\quad
\text{and} \quad \| V_{\j,\a}(\Phi)\|_{\U} \leq K \| \Phi\|_{\U}^2 \|a_1\|\cdots\|a_N\|.
\end{equation}

In order to see that being stable is a natural property, take $N=4$ and $\j=\{1,2\}$. In this case,
what we are imposing to a 4-linear form $\Phi\in \U(\X)$ is that the mapping
$$(x_1,x_2,x_3,x_4)\mapsto \Phi(x_1,x_2,a_3,a_4)\,\Phi(a_1,a_2,x_3,x_4)$$ also belongs to $\U(\X)$
for any $(a_1,\dots, a_4)$, with some control on the norm.
{The next result extends \cite[Theorem~2.1]{AGM06} and
\cite[Corollary~2.5]{AGM06}.}

\begin{theorem}\label{Un estable}
If the ideal of $N$-linear forms $\U$ is stable at
$\X=X_1\times \dots\times X_N$, then the set of
$N$-linear forms in $\U(\X)$ whose Arens extensions
attain the supremum-norm at the same $N$-tuple is $\|
\cdot\|_{\U}$-dense in $\U(\X)$.
\end{theorem}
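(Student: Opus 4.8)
The plan is to adapt the Lindenstrauss–Bishop–Phelps variational argument of \cite{Lind} and \cite{AGM06} to the ideal norm, using stability to keep all the auxiliary multilinear forms inside $\U(\X)$ with controlled $\|\cdot\|_\U$-norm. Fix $\Phi_0\in\U(\X)$ with $\|\Phi_0\|_\U=1$ (after normalizing) and $\varepsilon>0$. First I would build, by induction, a sequence $(\Phi_k)$ in $\U(\X)$ and points $\a^{(k)}=(a_1^{(k)},\dots,a_N^{(k)})\in B_\X$ such that: (a) $\|\Phi_{k+1}-\Phi_k\|_\U$ is small (summably so, say $\le \varepsilon 2^{-k}$), so that $\Phi_k\to\Phi$ in $\U(\X)$ and $\|\Phi-\Phi_0\|_\U\le\varepsilon$; (b) $\Phi_{k+1}$ is obtained from $\Phi_k$ by adding perturbation terms of the form $\lambda_k\sum_{\j}V_{\j,\a^{(k)}}(\Phi_k)$ (or a suitable variant), which lands in $\U(\X)$ by stability and has $\|\cdot\|_\U$-norm bounded by $\lambda_k K$ times a quantity controlled by $\|\Phi_k\|_\U^2$; (c) the points $\a^{(k)}$ are \emph{almost optimal} for $\Phi_k$ in the supremum norm, i.e. $|\Phi_k(\a^{(k)})|\ge\|\Phi_k\|-\delta_k$ with $\delta_k\downarrow 0$ fast; and (d) a Cauchy-type estimate forces $\a^{(k)}$ itself to converge to some $\a\in B_\X$.

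The role of the quadratic perturbation $V_{\j,\a}$ is the key structural point, and it is exactly the device that upgrades ``some Arens extension attains the norm'' to ``all $N!$ Arens extensions attain the norm at the same tuple.'' The heuristic (made precise in \cite{AGM06} for the supremum norm) is that if $\x$ is a near-maximizer of $V_{\j,\a}(\Phi)=\Phi(P_\j(\x)+P_{\j^c}(\a))\,\Phi(P_\j(\a)+P_{\j^c}(\x))$, then both ``halves'' of the variables are near-maximizers simultaneously, which after iterating over all subsets $\j$ and passing to Arens limits pins the attainment point down coordinatewise and makes it independent of the order in which one extends the variables. So the plan is: run the Lindenstrauss iteration with these $V_{\j,\a}$-perturbations inserted, exactly as in the proof of \cite[Theorem~2.1]{AGM06}, but tracking the $\U$-norm instead of the sup-norm at every step; stability (\ref{propiedad de Vj}) is precisely what guarantees each perturbed form stays in $\U(\X)$ and that the $\U$-norms of the perturbations are summable provided the scalars $\lambda_k$ decay geometrically, say $\lambda_k\sim 2^{-k}$, which can be arranged because the bound $K\|\Phi_k\|_\U^2\|a_1^{(k)}\|\cdots\|a_N^{(k)}\|$ stays bounded along the iteration (the $\U$-norms $\|\Phi_k\|_\U$ form a Cauchy, hence bounded, sequence, and the $a_i^{(k)}$ lie in the unit ball).

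Concretely, at stage $k$ I would: (i) pick $\a^{(k)}\in B_\X$ with $|\Phi_k(\a^{(k)})|$ within $\delta_k$ of $\|\Phi_k\|$; (ii) set $\Phi_{k+1}=\Phi_k+\lambda_k\sum_{\emptyset\neq\j\subsetneq\{1,\dots,N\}} c_\j\,V_{\j,\a^{(k)}}(\Phi_k)$ for suitable unimodular constants $c_\j$ aligning the phases with $\Phi_k(\a^{(k)})$, so that $\a^{(k)}$ becomes a point where $\Phi_{k+1}$ is, up to $O(\delta_k+\lambda_k^2)$, as large as it can be; (iii) verify via the standard convexity/extremality estimate that any near-maximizer of $\Phi_{k+1}$ is $O(\sqrt{\delta_k+\lambda_k})$-close to $\a^{(k)}$ in each coordinate, which yields the Cauchyness of $(\a^{(k)})$; then take limits. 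The limiting form $\Phi=\lim\Phi_k\in\U(\X)$ attains $\|\Phi\|$ at $\a=\lim\a^{(k)}$, and the same holds for each Arens extension $\overline\Phi$ at the corresponding $\a''$ (which, because $\a$ is already in $X$, is just $\a$ itself, viewed in $\X''$) — this last sentence is really the content of the supremum-norm argument of \cite{AGM06}, imported verbatim once we know $\Phi$ and $\Phi_k$ agree in the limit and the perturbations are of the $V_{\j,\a}$ shape.

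The main obstacle I expect is bookkeeping the \emph{two} competing error scales — the $\U$-norm error (which must be summable and bounded by $\varepsilon$) and the sup-norm optimality error $\delta_k$ (which must go to zero fast enough to force convergence of the maximizers) — while ensuring the quadratic nature of the $V_{\j,\a}$-bound in (\ref{propiedad de Vj}) doesn't cause the $\U$-norms $\|\Phi_k\|_\U$ to blow up; a standard fix is to choose $\lambda_k$ small relative to $2^{-k}/(K(1+\sup_j\|\Phi_j\|_\U)^2)$, which closes the loop by a bootstrap since the bound on $\sup_j\|\Phi_j\|_\U$ only depends on the (geometrically convergent) tail. The second delicate point is checking that adding the perturbation over all proper nonempty subsets $\j$ simultaneously (rather than one $\j$ at a time) still yields the coordinatewise rigidity of near-maximizers; alternatively one can cycle through the subsets $\j$ one at a time within each macro-stage $k$, at the cost of a slightly more elaborate indexing, which is the route taken in \cite{AGM06} and the one I would follow to stay safely within a verified template.
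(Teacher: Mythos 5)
The half of your plan that is the actual new content of Theorem~\ref{Un estable} --- run the recursion of \cite[Theorem~2.1]{AGM06} with the perturbations $V_{\j,\a}$, use stability to see that every iterate stays in $\U(\X)$, and use the bound \eqref{propiedad de Vj} together with geometrically decaying scalars to make the iteration $\|\cdot\|_{\U}$-Cauchy with total increment at most $\varepsilon$ --- is exactly the paper's proof, which does nothing more than rerun the construction of \cite[Theorem~2.1]{AGM06} while tracking the ideal norm as in \cite[Corollary~2.5]{AGM06}. Your bootstrap for keeping $\sup_k\|\Phi_k\|_{\U}$ bounded despite the quadratic bound in \eqref{propiedad de Vj} is the right kind of bookkeeping.

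However, there is a genuine gap in how you describe the attainment mechanism, namely steps (iii)--(d): you claim a ``convexity/extremality estimate'' forces near-maximizers of $\Phi_{k+1}$ to be close to $\a^{(k)}$, so that the tuples $\a^{(k)}$ are norm-Cauchy in $B_{\X}$, the limit $\a$ lies in $\X$, and the limit form attains its supremum norm at $\a$ (whence, as you say, the Arens extensions attain their norm at ``$\a$ viewed in $\X''$''). No such rigidity of near-maximizers holds in a general Banach space --- it is precisely what uniform convexity buys in the Bollob\'as-type results later in the paper --- and the conclusion it would give is too strong: it would prove a multilinear Bishop--Phelps theorem, i.e.\ density of forms attaining their norm on $\X$ itself, which fails in general \cite{AAP,Choi97,JP}; indeed the whole point of a Lindenstrauss-type statement, and of Section~3 of the paper, is that the Lindenstrauss conclusion can hold where Bishop--Phelps fails. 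In \cite{AGM06} (as in \cite{Lind}) the chosen tuples do \emph{not} converge in norm: one takes a weak-star cluster point $\a''\in B_{X_1''}\times\cdots\times B_{X_N''}$ and uses the quadratic shape of $V_{\j,\a}$ in \eqref{definicion de Vj} together with the iteration estimates to show that \emph{all} Arens extensions of the limit form attain their norm at that bidual tuple. If you replace your norm-convergence of the $\a^{(k)}$ by this weak-star cluster point argument (imported from \cite{AGM06}, as you intend for the rest), your proposal coincides with the paper's proof; as written, the final attainment step is incorrect.
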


\begin{proof} Fix $\Phi\in\L(^N \X)$. By the proof of \cite[Theorem~2.1]{AGM06}, there exists
a sequence of multilinear mappings $(\Phi_n)_n$ given recursively by
$$
\Phi_1 = \Phi, \quad \Phi_{n+1} = \Phi_n + \sum_{\j} C_n V_{\j,\a^n}(\Phi_n),
$$
where $(C_n)_n$ is a sequence of positive numbers, $(\a^n)_n\subset
\X$ and $V_{\j,\a^n}$ is defined as in \eqref{definicion de Vj} for
all $\j\subset \{1,\ldots,N\}$. In \cite[Theorem~2.1]{AGM06} it is
shown that, given $\eps >0$, $(C_n)_n$ and $\a^n$ can be chosen so
that $(\Phi_n)_n$ converges to an element $\Psi\in\L(^N\X)$ whose Arens
extensions attain their norm at the same $N$-tuple and $\| \Phi - \Psi\| <
\varepsilon$.

The stability of $\U$ implies that every $\Phi_n$ belongs
to $\U(\X)$ whenever $\Phi\in \U(\X)$. Now, the control of the norms  given in \eqref{propiedad de
Vj}
together with a careful reading of
the proof of~\cite[Corollary~2.5]{AGM06} ensure that $(C_n)_n$ and
$\a^n$ can be chosen so that $(\Phi_n)_n$ converges to an element $\Psi=
\| \cdot\|_{\U} - \lim \Phi_n$ and $\|\Phi - \Psi\|_{\U} < \varepsilon$.
\end{proof}

Given an $(N+1)$-linear form $ \Phi\colon X_1\times\dots\times
X_N\times X_{N+1} \to \mathbb K$, we
define
the associate $N$-linear mapping  $\tilde \Phi\colon  X_1\times\dots\times X_N\to X_{N+1}'$ as
usual:
$$\tilde \Phi (x_1,\dots,x_N)(x_{N+1})=\Phi (x_1,\dots,x_N,x_{N+1}).$$ Now, given an ideal of
$N$-linear mappings $\U$ we  define the ideal of $(N+1)$-linear forms $\widetilde \U$ by $$\Phi\in
\widetilde \U \quad\text{if and only if}\quad \tilde\Phi\in \U$$ and $$\|\Phi\|_{\widetilde {\U}}\colon =\|\tilde \Phi\|_{\U}.$$

Recall that a Banach ideal $\U$ is called \textit{regular} if $J_Y \circ \Phi \in \U(\X;Y'')$ implies $\Phi \in \U(\X;Y)$ and $\|\Phi\|_{\U}=\|J_Y \circ \Phi\|_{\U}$, where $J_Y \colon Y \hookrightarrow Y''$ is the natural injection.

It should be noted that  if $\Phi$ in the proof of the previous theorem is $w^*$-continuous in the
last variable,  then so is $\Psi$. As a consequence, we can proceed as in \cite[Theorem~2.3]{AGM06}
to obtain the
following.

\begin{corollary}
With the notation above, if $\U$ is regular and $\widetilde \U$ is stable at $X_1\times \dots\times X_N\times Y'$, then
the set of $N$-linear mappings in $\U(\X;Y)$ such that their Arens
extensions attain the supremum-norm at the same $N$-tuple is $\|
\cdot\|_{\U}$-dense in $\U(\X;Y)$.
\end{corollary}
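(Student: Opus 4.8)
The plan is to reduce the vector-valued statement to the scalar-valued Theorem~\ref{Un estable} by passing to the associated $(N+1)$-linear form, exactly as in \cite[Theorem~2.3]{AGM06}. Fix $\Phi\in\U(\X;Y)$ and $\eps>0$, and let $\Theta\colon X_1\times\dots\times X_N\times Y'\to\K$ be the $(N+1)$-linear form $\Theta(x_1,\dots,x_N,y')=y'(\Phi(x_1,\dots,x_N))$, so that its associated $N$-linear mapping is $\tilde\Theta=J_Y\circ\Phi$. Composing $\Phi$ on the left with $J_Y$ and using the regularity of $\U$, we get $\Theta\in\widetilde\U$ with $\|\Theta\|_{\widetilde\U}=\|\Phi\|_\U$. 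The crucial point is that $\Theta$ is $w^*$-continuous in its last variable, since there it is merely the evaluation at the fixed vector $\Phi(x_1,\dots,x_N)\in Y$.

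Since by hypothesis $\widetilde\U$ is stable at $X_1\times\dots\times X_N\times Y'$, Theorem~\ref{Un estable} yields $\Psi\in\widetilde\U$ with $\|\Theta-\Psi\|_{\widetilde\U}<\eps$ all of whose Arens extensions attain the supremum-norm at a common $(N+1)$-tuple. By the Remark preceding the statement, $\Psi$ is again $w^*$-continuous in the last variable: in the proof of Theorem~\ref{Un estable} the approximating sequence is obtained from $\Theta$ using only the assignments $\Phi\mapsto V_{\j,\a}(\Phi)$, finite sums and sup-norm limits, and each of these preserves $w^*$-continuity in the last variable (in $V_{\j,\a}(\Phi)$ that variable occurs in exactly one of the two factors, and a sup-norm limit of $(N+1)$-linear forms that are $w^*$-continuous on the $w^*$-compact ball $B_{Y'}$ stays $w^*$-continuous there). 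Hence $\tilde\Psi$ is $J_Y(Y)$-valued, say $\tilde\Psi=J_Y\circ\Psi_0$ with $\Psi_0\in\L(^N\X;Y)$ (note $\Psi(x_1,\dots,x_N,y')=y'(\Psi_0(x_1,\dots,x_N))$, so $\|\Psi\|=\|\Psi_0\|$). From $J_Y\circ\Psi_0\in\U(\X;Y'')$ and regularity we obtain $\Psi_0\in\U(\X;Y)$, and, since $\widetilde{\Theta-\Psi}=J_Y\circ(\Phi-\Psi_0)$, regularity also gives $\|\Phi-\Psi_0\|_\U=\|\Theta-\Psi\|_{\widetilde\U}<\eps$.

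It then remains to transfer the common norm attainment from $\Psi$ to $\Psi_0$, where I would argue as in \cite[Theorem~2.3]{AGM06}. For a permutation $\sigma$ of $\{1,\dots,N\}$, let $\overline\Psi^{\,\sigma}$ denote the Arens extension of $\Psi$ obtained by extending the first $N$ variables in the order prescribed by $\sigma$ and the $Y'$-variable last. Using that $\Psi$ is $w^*$-continuous in the last variable, a direct computation with iterated weak-star limits as in \eqref{arens extension} gives $\overline\Psi^{\,\sigma}(\x'',y''')=\langle y''',\overline{\Psi_0}^{\,\sigma}(\x'')\rangle$ under the $Y'''$--$Y''$ duality, where $\overline{\Psi_0}^{\,\sigma}$ is the corresponding Arens extension of $\Psi_0$ and we abbreviate $\x''=(x_1'',\dots,x_N'')$. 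Writing $(\x'',y''')$ for the common norm-attaining point of the Arens extensions of $\Psi$, and using $\|\Psi\|=\|\Psi_0\|$, $\|\overline{\Psi_0}^{\,\sigma}\|=\|\Psi_0\|$ and $\|y'''\|\le1$, for every $\sigma$ we obtain
\begin{align*}
\|\Psi_0\|=\|\Psi\|=\bigl|\overline\Psi^{\,\sigma}(\x'',y''')\bigr| &=\bigl|\langle y''',\overline{\Psi_0}^{\,\sigma}(\x'')\rangle\bigr|\\
&\le\bigl\|\overline{\Psi_0}^{\,\sigma}(\x'')\bigr\|\le\bigl\|\overline{\Psi_0}^{\,\sigma}\bigr\|=\|\Psi_0\|,
\end{align*}
so all the inequalities are equalities and $\overline{\Psi_0}^{\,\sigma}$ attains its norm at $\x''$. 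As $\sigma$ ranges over the permutations of $\{1,\dots,N\}$ this accounts for all the Arens extensions of $\Psi_0$, all attaining at the same $N$-tuple $\x''$; since $\Psi_0$ is $\eps$-close to $\Phi$ in $\|\cdot\|_\U$, this proves the density.

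I expect the main obstacle to lie in two bookkeeping steps borrowed from \cite{AGM06}: carefully verifying the Remark (that the construction in the proof of Theorem~\ref{Un estable} genuinely preserves $w^*$-continuity in the last variable, in particular under sup-norm limits), and establishing the identity $\overline\Psi^{\,\sigma}(\x'',y''')=\langle y''',\overline{\Psi_0}^{\,\sigma}(\x'')\rangle$ for the extensions of $\Psi$ that extend $Y'$ last, together with the observation that these already realize every Arens extension of $\Psi_0$. A subtle feature of the latter is that, a priori, the last coordinate $y'''$ of the common norm-attaining point lies in $Y'''$ and need not belong to $J_{Y'}(Y')$; the displayed inequality chain is precisely what makes this immaterial.
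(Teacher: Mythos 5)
Your proposal is correct and follows essentially the same route as the paper, which simply observes that the $w^*$-continuity in the last variable of $\Theta(x_1,\dots,x_N,y')=y'(\Phi(x_1,\dots,x_N))$ is preserved by the construction in Theorem~\ref{Un estable} (sums, the operators $V_{\j,\a}$ and uniform limits) and then transfers norm attainment as in \cite[Theorem~2.3]{AGM06}, exactly as you do via the identity $\overline\Psi^{\,\sigma}(\x'',y''')=\langle y''',\overline{\Psi_0}^{\,\sigma}(\x'')\rangle$ and the ensuing chain of inequalities. Your handling of regularity (to pass from $J_Y\circ\Psi_0$ back to $\Psi_0$ with equal ideal norm) and your remark that $y'''$ need not lie in $J_{Y'}(Y')$ are precisely the points the paper leaves implicit.
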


\medskip
We will see that most of the known examples of multilinear ideals are stable and, then, satisfy a
Lindenstrauss-type theorem. First, let us see that this property is fulfilled for \emph{every} ideal of
3-linear forms. Hence, we have a Lindenstrauss theorem for ideals of 3-linear forms and regular ideals of
bilinear mappings.

\begin{corollary}
Let $\left( \U , \| \cdot \|_{\U}\right)$ be a Banach ideal of
3-linear forms. Then, for every $\X=X_1\times X_2\times X_3$, the set of 3-linear forms in $\U(\X)$
whose Arens extensions attain the supremum-norm at the same
$3$-tuple is $\| \cdot\|_{\U}$-dense in $\U(\X)$.
\end{corollary}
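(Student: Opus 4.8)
The plan is to reduce everything to Theorem~\ref{Un estable}: it suffices to prove that \emph{every} Banach ideal $\U$ of $3$-linear forms is stable at every $\X=X_1\times X_2\times X_3$, in fact with stability constant $K=1$. So I would fix $\a=(a_1,a_2,a_3)\in\X$, $\Phi\in\U(\X)$ and $\j\subseteq\{1,2,3\}$, and verify the two requirements in \eqref{propiedad de Vj} for $V_{\j,\a}(\Phi)$.

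First I would cut down the list of subsets. Interchanging $\j$ with $\j^c$ in \eqref{definicion de Vj} merely swaps the two scalar factors, so by commutativity of multiplication in $\mathbb K$ one has $V_{\j,\a}=V_{\j^c,\a}$; hence it is enough to treat $\j=\emptyset$ and the singletons $\{1\},\{2\},\{3\}$, and relabeling the factors $X_1,X_2,X_3$ reduces this further to $\j=\emptyset$ and $\j=\{1\}$. For $\j=\emptyset$ we have $V_{\emptyset,\a}(\Phi)(\x)=\Phi(\a)\,\Phi(\x)$, a scalar multiple of $\Phi$, so $V_{\emptyset,\a}(\Phi)\in\U(\X)$ with $\|V_{\emptyset,\a}(\Phi)\|_{\U}=|\Phi(\a)|\,\|\Phi\|_{\U}\le\|\Phi\|\,\|a_1\|\|a_2\|\|a_3\|\,\|\Phi\|_{\U}\le\|\Phi\|_{\U}^2\,\|a_1\|\|a_2\|\|a_3\|$, using the standard estimate $\|\Phi\|\le\|\Phi\|_{\U}$ (a consequence of axioms (ii)--(iii)).

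The main case is $\j=\{1\}$, where
$$V_{\{1\},\a}(\Phi)(x_1,x_2,x_3)=\Phi(x_1,a_2,a_3)\,\Phi(a_1,x_2,x_3)=\varphi_1(x_1)\,\Phi(a_1,x_2,x_3),$$
with $\varphi_1:=\Phi(\cdot,a_2,a_3)\in X_1'$ and $\|\varphi_1\|\le\|\Phi\|\,\|a_2\|\|a_3\|$. The key point — and precisely what restricts the argument to $N=3$ — is that one of the two factors is a \emph{linear functional}, hence a rank-one object that the composition ideal property (ii) can absorb. Concretely, let $\lambda_{a_1}\colon\mathbb K\to X_1$, $t\mapsto ta_1$, and put $A:=\Phi\circ(\lambda_{a_1},\mathrm{Id}_{X_2},\mathrm{Id}_{X_3})$; by (ii), $A\in\U(\mathbb K\times X_2\times X_3)$ with $\|A\|_{\U}\le\|\Phi\|_{\U}\|a_1\|$, and $A(1,x_2,x_3)=\Phi(a_1,x_2,x_3)$. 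Precomposing once more in the first slot, $A\circ(\varphi_1,\mathrm{Id}_{X_2},\mathrm{Id}_{X_3})\in\U(\X)$, and by linearity of $A$ in its first variable $A(\varphi_1(x_1),x_2,x_3)=\varphi_1(x_1)\,A(1,x_2,x_3)=V_{\{1\},\a}(\Phi)(x_1,x_2,x_3)$. Applying (ii) a second time,
$$\|V_{\{1\},\a}(\Phi)\|_{\U}\le\|A\|_{\U}\,\|\varphi_1\|\le\|\Phi\|_{\U}\|a_1\|\cdot\|\Phi\|\,\|a_2\|\|a_3\|\le\|\Phi\|_{\U}^2\,\|a_1\|\|a_2\|\|a_3\|.$$
This establishes stability with $K=1$, so Theorem~\ref{Un estable} applies and yields the claimed density.

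I do not foresee a genuine obstacle: the only thing requiring care is the bookkeeping that identifies $V_{\{1\},\a}(\Phi)$ with the iterated composition of $\Phi$ with the rank-one maps $\lambda_{a_1}$ and $\varphi_1$, and that matches the two applications of (ii) to the factor $\|a_1\|\|a_2\|\|a_3\|$. What is worth emphasizing in the write-up is the reason the trick breaks down for $N\ge4$: there some $V_{\j,\a}(\Phi)$ is a product of two bilinear (or higher) forms in disjoint variables, neither of them rank one, so it cannot be manufactured from a single copy of $\Phi$ via the ideal operations alone.
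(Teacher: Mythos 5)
Your proposal is correct and is essentially the paper's argument: reduce to stability via Theorem~\ref{Un estable} and absorb the scalar factor $\Phi(\cdot,a_2,a_3)$ (resp.\ $\Phi(a_1,a_2,\cdot)$ in the paper's case $\j=\{1,2\}$) into a rank-one composition allowed by axiom (ii); the paper simply uses the single rank-one operator $T_\Phi(x_3)=\Phi(a_1,a_2,x_3)a_3$ instead of your two-step factorization through $\mathbb K$. The only nuance worth a word in the write-up is that ``relabeling the factors'' should be read as ``the same argument with the rank-one map placed in the appropriate slot,'' since an arbitrary ideal of $3$-linear forms need not be symmetric.
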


\begin{proof}
By Theorem~\ref{Un estable}, it suffices to prove that $\U(\X)$ is
stable. Take $\Phi \in \U(\X)$, $\a =(a_1,a_2,a_3) \in \X$ with
$\|a_k\|=1$ for $1 \leq k \leq 3$ and take $\j \subset \{1,2,3\}$.
We proceed to show that~\eqref{propiedad de Vj} is satisfied for
$\j=\{1,2\}$, the other cases are analogous.

Consider the linear operator $T_\Phi\colon X_3 \rightarrow X_3$ defined
by $T_\Phi(x_3) = \Phi(a_1,a_2,x_3)a_3$. Then  $\|T_\Phi\| \leq \| \Phi \| \leq \| \Phi \|_{\U}$ and
$$
V_{\j, \a}(\Phi)(\x)= \Phi(x_1,x_2, a_3) \Phi(a_1,a_2,x_3) = \Phi(x_1,x_2,
T_\Phi(x_3)) = \Phi \circ (I,I,T_\Phi) (x_1,x_2,x_3).
$$
Since $(\U, \| \cdot\|_{\U})$ is a  Banach ideal,
$V_{\j, \a}(\Phi) \in
\U(\X)$ and $\|V_{\j,\a}(\Phi)\|_{\U} \leq \| \Phi \|_{\U}^2$.
\end{proof}

Now the following corollary is immediate.

\begin{corollary}
{Let $\left( \U , \| \cdot \|_{\U}\right)$ be a regular
Banach ideal of bilinear mappings and let $X_1, X_2, Y$ be
Banach spaces. The set of  bilinear mappings of $\U(X_1\times X_2; Y)$
attaining their norm at the same pair is $\| \cdot\|_{\U}$-dense in
$\U(X_1\times X_2; Y)$.}
\end{corollary}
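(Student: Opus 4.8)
The plan is to obtain this as the bilinear ($N=2$) instance of the preceding corollary on regular ideals, so the whole task reduces to verifying its two hypotheses: that $\U$ is regular and that the associated $3$-linear ideal $\widetilde\U$ is stable at the triple $X_1\times X_2\times Y'$. The first is part of the assumption, so the only thing to do is produce the stability of $\widetilde\U$, and for that I would like to invoke the previous corollary (or rather its proof), which shows that \emph{every} Banach ideal of $3$-linear forms is stable at every triple of Banach spaces. Hence the real content is just to check that $\widetilde\U$ is a Banach ideal of $3$-linear forms.

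First I would verify this last point. The correspondence $\Phi\mapsto\tilde\Phi$ is, by its very definition, an isometric bijection between $\widetilde\U(X_1\times X_2\times X_3)$ and $\U(X_1\times X_2;X_3')$, so the linear-subspace requirement, the normalization of the multiplication $\mathbb C^3\to\mathbb C$ (whose associate is the multiplication $\mathbb C^2\to\mathbb C'\cong\mathbb C$, of $\U$-norm one), and completeness of each space $\widetilde\U(X_1\times X_2\times X_3)$ are all immediately inherited from $\U$. The ideal inequality for $\widetilde\U$ is the only step requiring a remark: composing a $3$-linear form with operators $T_1,T_2,T_3$ in the three variables and a scalar $S$ in the range corresponds, on the associated bilinear map, to precomposing $\tilde\Phi$ with $(T_1,T_2)$ and postcomposing with the adjoint $T_3'$ (times $S$); since $\|T_3'\|=\|T_3\|$, the required estimate follows from the ideal inequality of $\U$.

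Next, having established that $\widetilde\U$ is a Banach ideal of $3$-linear forms, the proof of the previous corollary applies verbatim to $\widetilde\U$ and shows that $\widetilde\U(\mathbf Z)$ is stable for every triple $\mathbf Z$; in particular $\widetilde\U$ is stable at $X_1\times X_2\times Y'$. Finally, since $\U$ is a regular Banach ideal by hypothesis, both hypotheses of the corollary on regular ideals are met with $N=2$, and that corollary yields that the bilinear mappings in $\U(X_1\times X_2;Y)$ whose Arens extensions attain the supremum-norm at the same pair are $\|\cdot\|_{\U}$-dense in $\U(X_1\times X_2;Y)$, which is exactly the assertion.

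There is essentially no genuine obstacle here beyond the routine bookkeeping of paragraph two; the one place where a little care is appropriate is the role of regularity, which is precisely what guarantees that the passage from $\Phi\in\U(X_1\times X_2;Y)$ to $J_Y\circ\Phi$ — the device by which a $Y$-valued bilinear map is encoded as a $3$-linear form on $X_1\times X_2\times Y'$ that is moreover $w^*$-continuous in the last variable — stays inside $\U$ with the same $\|\cdot\|_{\U}$-norm, so that nothing is lost when one reads the conclusion back for the original $Y$-valued mappings.
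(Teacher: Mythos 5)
Your proposal is correct and follows exactly the route the paper intends when it calls this corollary ``immediate'': the regularity hypothesis feeds into the earlier corollary requiring $\U$ regular and $\widetilde{\U}$ stable at $X_1\times X_2\times Y'$, and the stability of $\widetilde{\U}$ comes from the preceding corollary's proof that \emph{every} Banach ideal of $3$-linear forms is stable. Your extra verification that $\widetilde{\U}$ is indeed a Banach ideal of $3$-linear forms (via $S\circ\Phi\circ(T_1,T_2,T_3)\,\widetilde{}\,=S\,T_3'\circ\tilde\Phi\circ(T_1,T_2)$ and the isometry $\Phi\mapsto\tilde\Phi$) is routine bookkeeping the paper leaves implicit, and it is correct.
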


\section{Examples of multilinear ideals satisfying a Lindenstrauss theorem}

Let us start with the simplest examples. Any finite type multilinear form on $X_1\times\cdots\times
X_N$ ($i.e.$ any linear combinations of products of linear forms),  has a unique Arens extension to
$X_1''\times\cdots\times X_N''$ which is weak-star continuous on each coordinate.
By the Banach-Alaogu's theorem, this extension attains its supremum norm. As a consequence,
every Banach ideal in which finite type multilinear forms are dense satisfies the multilinear
Lindenstrauss theorem. This is the case, for instance, of the ideal of nuclear multilinear
forms. More generally, if $\U$ is a minimal ideal of multilinear forms, the finite type
multilinear forms are dense in $\U$ \cite{Flo01,FlGa03} and Lindenstrauss theorem trivially
holds.

As observed in \cite{AGM06}, if $\U$ is an ideal of multilinear forms which is dual to an
associative tensor norm (such as the injective or projective tensor norms $\varepsilon$ and $\pi$),
then $\U$ satisfies the multilinear Lindenstrauss theorem. We can rephrase their remark in our
terminology: {\it ideals  which are dual to associative tensor norms are always stable}. In
\cite{AGM06}, it is mentioned that the ideal of multiple summing multilinear mappings satisfies the
Lindenstrauss theorem. We will see that this is truly the case, although this ideal is not dual to
any associative tensor norm as we show in Proposition~\ref{noesasociativa}. Actually, it is not very
usual for ideals of multilinear forms to be dual to associative tensor norms. Fortunately, in order
to satisfy a multilinear Lindenstrauss theorem (in fact, in order to be stable), a much weaker
property is sufficient. Coherent and multiplicative ideals of polynomials have been studied (also
with different terminologies) in \cite{CaDiMu09,BoPe05}. Here, we present a multilinear version of
these properties (see \cite{BoCaPe11}, where similar properties for multilinear mappings are
considered). To simplify the definitions, we restrict to symmetric ideals, although this
is clearly not necessary.

Fix $N \in \mathbb N$ and $\X=X_{1} \times
\cdots \times X_{N}$.
If $\theta$ is a permutation of $\{1,\ldots, N\}$, we write $$\X_\theta=X_{\theta(1)} \times \cdots
\times X_{\theta(N)}$$ and $\x_\theta=(x_{\theta(1)}, \ldots,
x_{\theta(N)})$.
We say that the ideal of multilinear mappings $\U_N$ is \textbf{symmetric} if for any $\Phi \in
\U_N(\X;Y)$ and every permutation $\theta$ of $\{1, \dots, N\}$, the $N$-linear
mapping $\theta \Phi\colon \X_{\theta} \longrightarrow Y$,
$$\theta \Phi (\x_\theta) = \Phi(\x)
$$
belongs to $ \U_N(\X_\theta;Y)$ with $\|\theta
\Phi\|_{\U_{N}} = \| \Phi\|_{\U_{N}}$.

\begin{definition}
Let $\mathfrak U=(\U_n)_{n}$ be a sequence where $\U_n$ is a symmetric Banach ideal
of $n$-linear forms for each $n \in \mathbb N$. We say that $\mathfrak U$ is
\textbf{multiplicative} if there exist positive constants $C$ and $D$ such
that, for any $N\in \mathbb N$ and any $\X=X_1\times \cdots\times X_N$:
\begin{enumerate}[\upshape (i)]
\item For $\Phi \in \U_N(\X)$ and $a_N \in X_N$, the $(N-1)$-linear form  $\Phi_{a_N}$ given by
$$
\Phi_{a_N}(x_1, \dots, x_{N-1}) = \Phi(x_1, \dots, x_{N-1}, a_N),
$$
belongs to $ \U_{N-1}(X_1\times \dots\times X_{N-1};Y)$ and  $\|
\Phi_{a_N}\|_{\U_{N-1}} \leq C \| \Phi\|_{\U_N} \| a_N\|$.

\item For $\Phi\in\U_{k}(X_1\times \dots\times X_k)$ and $\Psi\in\U_{N-k}(X_{k+1}\times \dots\times
X_N)$,
the $N$-linear form $\Phi\cdot \Psi$ given by
$$(\Phi\cdot \Psi) (x_1, \dots, x_N) =  \Phi(x_1, \dots,
x_k) \Psi(x_{k+1},\dots,x_N)
$$
belongs to $\U_N(\X)$ and $\| \Phi\cdot \Psi\|_{\U_{N}}
\leq D^{N} \| \Phi\|_{\U_{k}} \| \Psi\|_{\U_{N-k}}$.
\end{enumerate}
\end{definition}

It is rather  easy to see that if $\mathfrak U=(\U_n)_{n}$ is a multiplicative
sequence of multilinear ideals, then $\U_n$ is stable at any Banach space, for any $n\in \mathbb N$.
What makes this concept interesting in our framework is that most of the usual ideals of multilinear
forms have been proven to be multiplicative. For example, the ideals of nuclear, integral,
extendible, multiple $p$-summing multilinear forms ($1\le p <\infty$) are multiplicative (see
\cite{CaDiMu07,CaDiMu12,tesisMuro}  for the proof in the polynomial case, the multilinear one being
analogous). Then, the Lindenstrauss theorem holds for all these ideals. Also, if we consider multilinear forms on Hilbert spaces, the class of Hilbert-Schmidt
and, more generally, the multilinear Schatten classes are multiplicative.
As a consequence, since Hilbert spaces
are reflexive, these ideals satisfy a multilinear Bishop-Phelps
theorem.

We end this section by showing that the ideal of multiple summing mappings is not dual to any
associative tensor norm.

\begin{definition}
Let  $1 \leq p < \infty$. A multilinear form $\Phi : X_1 \times \dots \times X_N \rightarrow \mathbb
K$ is  \textit{multiple $p$-summing} if there exists $K >0$ such that for any sequences
$(x_{i_j}^j)_{i_j = 1}^{m_j} \subseteq X_j$, $j=1,\ldots, N$, we have
\begin{equation} \label{definicion multiple p sumante}
\left( \sum_{i_1, \dots,i_N=1}^{m_1, \dots,m_N} | \Phi(x_{i_1}^1, \dots, x_{i_N}^N)|^p\right)^{1/p}
\leq K \prod_{j=1}^N \Vert (x_{i_j}^j)_{i_j = 1}^{m_j}\Vert_{p}^w.
\end{equation}
The least constant $K$ satisfying the inequality is the  $p$-summing norm of $T$ and is denoted by
$\pi_{p}(T)$. We write $\Pi_{p}^N(X_1\times\dots \times X_N)$ for the space of multiple $p$-summing
forms.
\end{definition}
Recall that $\Pi_{p}^N(X_1\times\dots\times X_N)$ is the dual of the tensor product
$X_1\otimes\cdots\otimes X_N$ endowed with the tensor norm $\tilde{\alpha}_p(u)$ \cite[Proposition
3.1]{P-GV03} (see also \cite{Ma03}) where
\begin{equation}
\tilde{\alpha}_p(u) = \inf \{ \sum_{m=1}^M \Vert (\lambda_{m,i_m^1, \dots,i_m^N})_{i_m^1,
\dots,i_m^N =1}^{I_m^1, \dots, I_m^N}\Vert_{p'} . \Vert (x_{m,i_m^1}^1)_{i_m^1=1}^{I_m^1}\Vert_p^w
\dots \Vert (x_{m,i_m^N}^1)_{i_m^N=1}^{I_m^N}\Vert_p^w \}
\end{equation}
with $\frac{1}{p} + \frac{1}{p'} =1$ and the infimum is taken over all the representations of the
form $$u = \sum_{m=1}^{M} \sum_{i_m^1, \dots, i_m^N=1}^{I_m^1, \dots, I_M^N} \lambda_{m,
i_m^1,\dots,i_m^N} \, x_{m,i_m^1}^1 \otimes \dots \otimes x_{m,i_m^N}^N .$$

Although we know that the $(\Pi_{2}^n)_n$ is a multiplicative sequence, we have the following.

\begin{proposition}\label{noesasociativa}
The ideal $\Pi_{2}^N$ ($N\in \mathbb N$) of multiple 2-summing forms is not dual to any associative
tensor norm.
\end{proposition}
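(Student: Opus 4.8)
The plan is to argue by contradiction, reduce to bilinear forms, and then show that the tensor norm $\tilde\alpha_2$ (the case $p=2$ of $\tilde\alpha_p$ above) already fails the first identity demanded of an associative tensor norm. Suppose $\Pi_2^N$ were the dual of an associative tensor norm $\alpha$. Placing $\mathbb{K}$ in the last $N-2$ slots -- which changes neither membership in $\Pi_2^N$ nor the multiple $2$-summing norm, since adjoining a scalar variable simply multiplies everything by $\|(\lambda_i)\|_2$ -- and using that $E\otimes_\alpha\mathbb{K}=E$ isometrically for a finitely generated tensor norm, one gets $\Pi_2^2(X\times Y)=(X\otimes_\alpha Y)'$ with $\alpha$ still associative on order $2$. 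Since $\Pi_2^2$ is a maximal Banach ideal of bilinear forms, its associated tensor norm is unique; and as $\Pi_2^2=(\tilde\alpha_2)'$ by the identification recalled above (\cite[Proposition~3.1]{P-GV03}), necessarily $\alpha=\tilde\alpha_2$ on order $2$. Thus it suffices to prove that $\tilde\alpha_2$ is \emph{not} associative.

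Associativity of $\tilde\alpha_2$ is the statement that, for all $X,Y,Z$, the two tensor norms $\tilde\alpha_2\big(\cdot\,;X,Y\otimes_{\tilde\alpha_2}Z\big)$ and $\tilde\alpha_2\big(\cdot\,;X\otimes_{\tilde\alpha_2}Y,Z\big)$ on $X\otimes Y\otimes Z$ coincide. Dualizing, and using $(Y\otimes_{\tilde\alpha_2}Z)'=\Pi_2^2(Y\times Z)$, this amounts to: under the canonical identification of a tensor $u\in X\otimes Y\otimes Z$ (equivalently a trilinear form $\Phi$, via $\Phi^\sharp(x,y\otimes z)=\Phi(x,y,z)$) the two multiple $2$-summing norms of $\Phi^\sharp$ -- as a bilinear form on $X\times(Y\otimes_{\tilde\alpha_2}Z)$ and as a bilinear form on $(X\otimes_{\tilde\alpha_2}Y)\times Z$ -- agree (and, when $N\ge 3$, agree with the multiple $2$-summing norm of $\Phi$ on $X\times Y\times Z$). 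To refute this I would produce a sequence of finite-dimensional examples: spaces $X_n,Y_n,Z_n$ chosen among $\ell_1^n,\ell_2^n,\ell_\infty^n$ and a tensor $u_n$ (for instance coming from a Kahane--Salem--Zygmund random $\pm1$ coefficient form on products of $\ell_\infty^n$'s) for which the two norms above grow at strictly different rates in $n$. The mechanism is that on products of $\ell_\infty^n$'s the multiple $2$-summing norm is controlled, via the unit-vector systems, by the $\ell_2$-norm of the coefficient array, whereas an intermediate factor $Y_n\otimes_{\tilde\alpha_2}Z_n=\Pi_2^2(Y_n\times Z_n)'$ behaves quite unlike $\ell_\infty^{n^2}$ -- bilinear forms on such products are operators $\ell_\infty^n\to\ell_1^n$ whose $2$-summing norms are pinned down by Grothendieck's inequality -- so that grouping two variables before measuring costs a different power of $n$ than measuring all at once.

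The reduction to order $2$ and the dual reformulation of associativity are routine (standard tensor-norm machinery, following Defant--Floret). The real work, and the main obstacle, is the asymptotic analysis of the chosen example: one needs matching upper \emph{and} lower bounds, in powers of $n$, for the multiple $2$-summing norms on both sides -- equivalently, sharp estimates of $\pi_2$ for the bilinear forms involved, where Grothendieck's theorem and the failure of $\pi_2(T)=\pi_2(T')$ (for instance for the formal inclusions $\ell_1^n\hookrightarrow\ell_2^n$ versus $\ell_2^n\hookrightarrow\ell_\infty^n$) are the source of the gap -- and one must then track these estimates through the iterated tensor product to see that the exponents genuinely differ. A possible shortcut would be to appeal to the known paucity of associative tensor norms (morally $\varepsilon$, $\pi$ and the Hilbertian norm) and check directly that $\tilde\alpha_2$ is none of them, since $\Pi_2^2$ is neither the space of all bilinear forms, nor the ideal of nuclear forms, nor the dual of the Hilbertian tensor norm; but making such a classification precise is itself delicate, so the direct estimate seems safer.
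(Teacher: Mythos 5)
Your reduction to the two-fold case is reasonable as far as it goes: adjoining scalar slots does preserve the multiple $2$-summing norm, and duality then forces the hypothetical associative norm $\alpha$ to coincide (up to equivalence --- which is all you can and need to extract, since the hypothesis need not be isometric) with $\tilde{\alpha}_2$ in order $2$. The genuine gap is that the actual refutation --- exhibiting spaces and tensors for which the two groupings of $\tilde{\alpha}_2$ give non-equivalent norms --- is only announced, never carried out. You say you ``would produce'' finite-dimensional examples from Kahane--Salem--Zygmund random forms and concede that ``the real work\dots is the asymptotic analysis,'' but no specific tensor is fixed, no upper or lower bound is proved, and it is not evident that the proposed mechanism yields a gap at all: for instance, by the multilinear Grothendieck theorem of Bombal--P\'erez-Garc\'{\i}a--Villanueva every multilinear form on products of $\ell_\infty^n$'s is multiple $2$-summing with uniformly comparable norm, so in several of the configurations you mention the quantities you want to separate are a priori squeezed near the sup norm; whether grouping two variables before measuring really changes the power of $n$ is precisely the matching two-sided estimate you defer. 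Your fallback (``the known paucity of associative tensor norms'') is, as you yourself note, not a theorem you can cite. As it stands, this is a plausible strategy, not a proof.

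For comparison, the paper avoids all asymptotics by a soft argument: supposing $\Pi_2^4(c_0\times\cdots\times c_0)$ were the dual of an iterated associative norm $\beta$, the Bombal--P\'erez-Garc\'{\i}a--Villanueva theorem forces $\beta$ to be equivalent to $\pi$ on $c_0\otimes c_0$ and on the $4$-fold product; since $c_0\tilde{\otimes}_\pi c_0$ contains uniformly complemented copies of $\ell_2^n$ (Cabello-S\'anchez--P\'erez-Garc\'{\i}a--Villanueva), this gives $\ell_2^n\otimes_\beta\ell_2^n\simeq\ell_2^n\otimes_\pi\ell_2^n$ uniformly in $n$, hence $\ell_2\otimes_\beta\ell_2\simeq\ell_2\otimes_\pi\ell_2$ by the density lemma, i.e.\ every bilinear form on $\ell_2\times\ell_2$ would be multiple $2$-summing --- contradicting the fact that on Hilbert spaces multiple $2$-summing forms are exactly the Hilbert--Schmidt ones. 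To salvage your route you would have to supply the concrete example together with both the upper and lower estimates for each grouping; otherwise the structural argument above is the way to close the proof.
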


\begin{proof}
Take $N=4$ and  $X_1=\cdots =X_4= c_0$. Suppose that $\beta$ is  an associative tensor norm of order
2 such that
$$
\Pi_{2}^4(c_0\times\cdots\times c_0) \simeq \Big( \big( c_0 \tilde{\otimes}_{\beta} c_0\big)
\tilde{\otimes}_{\beta}  \big( c_0 \tilde{\otimes}_{\beta} c_0\big)\Big)'.
$$
By  \cite[Theorem 3.1]{Bombal-PerezGarcia-Villanueva}, every  multilinear  form on $c_0$ is multiple
2-summing. As a consequence, the projective tensor norm $\pi$ and the tensor norm predual to the multiple 2-summing forms
should be equivalent on  $c_0\otimes\cdots\otimes c_0$. Using this fact first for the
4-fold and then for the 2-fold tensor products, we have
$$
\big( c_0 \tilde{\otimes}_{\pi} c_0\big) \tilde{\otimes}_{\pi}  \big( c_0 \tilde{\otimes}_{\pi}
c_0\big) \simeq\big( c_0 \tilde{\otimes}_{\beta} c_0\big) \tilde{\otimes}_{\beta}  \big( c_0
\tilde{\otimes}_{\beta} c_0\big) \simeq  \big( c_0 \tilde{\otimes}_{\pi} c_0\big)
\tilde{\otimes}_{\beta}  \big( c_0 \tilde{\otimes}_{\pi} c_0\big).
$$

In \cite{Cabello-Garcia-Villanueva} it is shown that $c_0 \tilde{\otimes}_{\pi} c_0$ has uniformly
complemented copies of $\ell_2^n$. Then, the isomorphisms given above imply that
$$
\ell_2^n \otimes_\pi \ell_2^n \simeq \ell_2^n \otimes_\beta \ell_2^n,
$$
uniformly in $n \in \mathbb N$. The Density Lemma \cite[13.4]{DefFlo93} then gives $$\ell_2
\otimes_\pi \ell_2 \simeq \ell_2 \otimes_\beta \ell_2, $$
which means that every bilinear form on $\ell_2\times\ell_2$ is multiple 2-summing. But in Hilbert
spaces, multiple 2-summing  and  Hilbert-Schmidt multilinear forms coincide, and clearly there are
bilinear forms which are not Hilbert-Schmidt. This contradiction completes the proof.
\end{proof}

\section{Integral representation and Lindenstrauss theorem for symmetric multilinear mappings}
\label{formula integral} \label{lindenstrauss polinomial}

We devote this section to the special class of symmetric multilinear mappings. Since symmetric $N$-linear mappings are defined on $\X$ for $X_1=\cdots=
X_N=X$, we simply write $\mathcal{L}_s(^NX;Z)$ to denote the space of these mappings (with values
on a Banach space $Z$). We prove a  Lindenstrauss theorem for $\mathcal{L}_s(^NX;Z)$ under certain hypotheses on $X$ and $Z$. Recall that a
Banach space $Z$ has property $(\beta)$ of Lindenstrauss, see  \cite{Lind}, if there exists a subset $\{(z_\alpha, g_\alpha): \, \alpha \in \Lambda\} \subset Z \times Z'$ satisfying:
\begin{enumerate}[\upshape (i)]
\item $\Vert z_\alpha\Vert = \Vert g_\alpha\Vert = g_\alpha(z_\alpha) =1$,
\item $\vert g_\alpha(z_\gamma)\vert \leq \lambda$ for $\alpha \neq \gamma$ and some $0\leq \lambda < 1$,
\item for all $z \in Z$, $\Vert z\Vert = \sup_{\alpha \in \Lambda} \vert g_\alpha(z)\vert$.
\end{enumerate}
Examples of spaces with this property are $c_0$, $\ell_\infty$ and $C(K)$ with $K$ having a dense set of isolated points.

Our main result in this section is the following.

\begin{theorem} \label{Lind simetrico}
Let $X$ be a Banach space whose dual is separable and
has the approximation property and let $Z$ be a dual space or a Banach space with property $(\beta)$. Then, every  symmetric multilinear mapping in $\mathcal{L}_s(^NX;Z)$ can be approximated by
symmetric multilinear mappings whose Arens extensions
attain the supremum-norm at the same $N$-tuple.
\end{theorem}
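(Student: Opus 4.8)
The strategy is to reduce the symmetric multilinear case to the already-established theorem for (non-symmetric) multilinear ideals, namely Theorem~\ref{Un estable} and its corollaries, by exhibiting $\mathcal{L}_s(^NX;Z)$ as a suitable ideal that is stable, or — more likely, given the hypotheses on $X$ and $Z$ — by passing through an integral representation. First I would invoke the extension of \cite[Theorem~2.2]{CaLaMa12} announced in the introduction: since $X'$ is separable with the approximation property, the duality between the symmetric tensor product $\widehat{\otimes}_{s,\varepsilon_s}^N X'$ (or the relevant symmetric tensor norm) and $\mathcal{L}_s(^NX)$ admits an integral formula, which lets one identify symmetric $N$-linear forms with integrals against measures on the unit ball of $X'$ with its weak-star (compact, metrizable) topology. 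This is the technical engine: it produces, for a given $\Phi\in\mathcal{L}_s(^NX;Z)$, a concrete approximating family whose Arens extensions are controlled.

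Next, for the scalar-valued case $Z=\mathbb{K}$, I would mimic the Aron–García–Maestre / Acosta–García–Maestre perturbation scheme: write the approximants $\Psi=\|\cdot\|-\lim\Psi_n$ where each step adds a symmetric analogue of the functions $V_{\j,\a}$ from \eqref{definicion de Vj}; the key point is that for the \emph{symmetric} ideal the relevant perturbations are symmetrizations of products $\Phi(P_\j(\x)+P_{\j^c}(\a))\,\Phi(P_\j(\a)+P_{\j^c}(\x))$, and one must check these stay symmetric and keep the same norm-control. Then I would handle the vector-valued target: if $Z$ is a dual space $Z=W'$, I identify $\mathcal{L}_s(^NX;W')$ with a subspace of scalar-valued $(N{+}1)$-linear-type forms on $X\times\cdots\times X\times W$ that are weak-star continuous in the last variable (as in \cite[Theorem~2.3]{AGM06}), apply the scalar result, and observe the perturbation preserves weak-star continuity in the last slot so the limit $\Psi$ again takes values in $Z$. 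If $Z$ has property $(\beta)$ of Lindenstrauss, I instead use the $\{(z_\alpha,g_\alpha)\}$ system directly: one adds to $\Phi$ a small correction $\sum_\alpha c_\alpha\, z_\alpha\, g_\alpha(\Phi(\cdot))$-type term forcing the norm to be attained, exactly the device Lindenstrauss used in the linear case, now applied to the scalar symmetric forms $x\mapsto g_\alpha(\Phi(x,\dots,x))$ which we already know we can perturb; property (iii) of $(\beta)$ guarantees the norm of the vector-valued limit is controlled by the scalar data, and conditions (i)–(ii) guarantee norm attainment survives in the limit.

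The main obstacle I expect is maintaining \emph{symmetry together with simultaneous Arens-norm-attainment at one $N$-tuple} under the iterative perturbation: the $V_{\j,\a}$ construction is inherently asymmetric (it singles out a subset $\j$ of coordinates), so one must average over permutations, and then verify both that the symmetrized perturbation still lies in the ideal with the right quadratic norm bound $\lesssim \|\Phi\|_{\mathcal{L}_s}^2\|a_1\|\cdots\|a_N\|$, and that when the Arens extensions of the symmetrized object attain their norm, they do so at a \emph{single common} tuple — which is automatic for the symmetric extension since all $N!$ Arens extensions of a symmetric form coincide, but requires care to preserve along the limit. The separability of $X'$ is used to keep the relevant compact sets metrizable so that nets can be replaced by sequences and the recursive construction converges; the approximation property is what makes the integral representation (and the density of nice symmetric forms) available. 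I would check the convergence estimates exactly as in the proof of Theorem~\ref{Un estable}, which already packaged the delicate summability argument of \cite[Corollary~2.5]{AGM06}.
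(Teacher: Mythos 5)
There is a genuine gap, and it sits exactly where you flag "the main obstacle": your plan for the scalar symmetric case is to rerun the Acosta--Garc\'{\i}a--Maestre iteration with symmetrized versions of the perturbations $V_{\j,\a}$, but you never show (and it is not known) that this symmetrization works. The convergence and simultaneous-norm-attainment estimates in \cite{AGM06} depend on the precise structure of $V_{\j,\a}(\Phi)(\x)=\Phi(P_\j(\x)+P_{\j^c}(\a))\,\Phi(P_\j(\a)+P_{\j^c}(\x))$ at the near-extremal tuple $\a$, and averaging over permutations destroys this structure; one cannot instead take $\a$ diagonal, since a symmetric form need not nearly attain its norm at a diagonal tuple. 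If such a symmetrized iteration did go through, one would obtain a symmetric (and essentially polynomial) Lindenstrauss theorem with no hypotheses on $X$, which is precisely what is not available --- this is the reason the paper takes a different route. A symptom of the gap is that in your scheme the hypotheses on $X$ play almost no role: you invoke separability of $X'$ only "to replace nets by sequences," whereas in the actual argument separability of $X'$ together with the approximation property is what produces a bounded sequence of finite-rank operators converging strongly to the identity on $X$ and on $X'$, which in turn gives the pointwise representation \eqref{limite de medibles} of the Arens extensions as iterated limits of finite-type extensions and hence the measurability needed for the integral formula.

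The paper's proof avoids any perturbation scheme. For $Z=Y'$ one identifies $\mathcal{L}_s(^NX;Y')$ isometrically with the dual of $(\tilde{\otimes}_{\pi}^{N,s}X)\tilde{\otimes}_{\pi}Y$ and applies the classical (linear) Bishop--Phelps theorem there: this already yields a symmetric $\Psi$ with $\|\Phi-\Psi\|<\varepsilon$ whose associated functional $L_\Psi$ attains its norm at some $u$ with $\|u\|_\pi=1$. The integral formula of Lemma~\ref{formula integral} is then used not to build approximants (as you suggest) but to convert this norm attainment into norm attainment of the Arens extensions: writing $L_\Psi(u)$ as an integral of $\overline{\Psi}$ against a measure $\mu_u$ with $\|\mu_u\|\le 1$ forces $|\overline{\Psi}(x_1'',\dots,x_N'')(y'')|=\|\Psi\|$ $\mu_u$-almost everywhere, and since reordering the iterated limits in \eqref{limite de medibles} produces all $N!$ Arens extensions, they all attain their norm $\mu_u$-a.e., hence simultaneously at one tuple. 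The case of $Z$ with property $(\beta)$ is then obtained from the scalar case via Lemma~\ref{de escalar a beta} (the Choi--Kim-type transfer), which is close in spirit to the $(z_\alpha,g_\alpha)$ correction you sketch, so that final step of yours is reasonable; but the heart of your proposal --- the symmetrized iterative perturbation --- is unproven and is precisely the step the paper's duality-plus-integral-formula argument is designed to bypass.
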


The proof of Theorem~\ref{Lind simetrico} is based on the following lemmas. The first one extends to the multilinear setting the integral formula for the duality between tensor products and polynomials given in  \cite[Theorem~2.2]{CaLaMa12}. We briefly sketch the corresponding proof. The second lemma extends \cite[Theorem~2.1~(ii)]{ChoiKim96}, stated under the framework of Bishop-Phelps-type theorems. We omit its proof which follows the one given in \cite{ChoiKim96} (see also\cite{CaMa} where Lemma~\ref{de escalar a beta} is obtained in the polynomial context).

\begin{lemma}\label{formula integral} Let $\X=X_1\times \cdots\times X_N$  be an $N$-tuple of Banach spaces each of which has   separable dual with approximation property and let $Y$ be a Banach space. Then,
 for each $u \in (\tilde{\otimes}_{\pi,
j=1,\dots,N}^{N}X_j) \tilde{\otimes}_{\pi} Y$, there exists a regular
Borel measure $\mu_u$ on $(B_{X_1''}, w^*) \times \cdots \times
(B_{X_N''}, w^*) \times (B_{Y''}, w^*)$ such that $\Vert \mu_u\Vert
\leq \Vert u\Vert_{\pi}$ and
\begin{equation} \label{ec. formula integral vectorial}
\left\langle u,\Psi \right\rangle = \int_{B_{X_1''}\times \cdots \times
B_{X_N''} \times B_{Y''}} \overline{\Psi}(x_1'', \dots, x_N'')(y'')
d\mu_u(x_1'',\dots,x_N'',y''),
\end{equation}
for all $\Psi \in \mathcal{L}(^N\X;Y')$, where $\pi$ is the projective tensor norm.
\end{lemma}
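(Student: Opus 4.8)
The plan is to reduce the claimed integral formula to the scalar-valued case already available from \cite{CaLaMa12}. First I would observe that, by the universal property of the projective tensor norm, the space $\big(\tilde{\otimes}_{\pi, j=1,\dots,N}^{N}X_j\big)\tilde{\otimes}_{\pi}Y$ can be identified isometrically with $\tilde{\otimes}_{\pi, j=1,\dots,N+1}^{N+1}Z_j$, where $Z_j=X_j$ for $j\le N$ and $Z_{N+1}=Y$. Under this identification, an element $\Psi\in\mathcal{L}(^N\X;Y')$ corresponds to the $(N+1)$-linear form $\widehat{\Psi}\colon X_1\times\dots\times X_N\times Y\to\mathbb{K}$ given by $\widehat{\Psi}(x_1,\dots,x_N,y)=\Psi(x_1,\dots,x_N)(y)$, and the duality pairing $\langle u,\Psi\rangle$ becomes exactly the scalar pairing $\langle u,\widehat{\Psi}\rangle$ between the $(N+1)$-fold projective tensor product and $(N+1)$-linear forms. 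The hypothesis that each $X_j$ has separable dual with the approximation property, together with the same hypotheses needed for $Y$, is precisely what \cite[Theorem~2.2]{CaLaMa12} (in its multilinear incarnation) requires; however, since the cited theorem is stated for polynomials on a single space, the first real task is to record that the same argument works verbatim for multilinear forms on a product of spaces. This is the point at which the excerpt says it ``briefly sketches'' the proof, so I would reproduce the core of that argument.

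The core argument I would give runs as follows. Since each $X_j'$ is separable with the approximation property, $(B_{X_j''},w^*)$ is a metrizable compact space and the natural map sending an $(N+1)$-linear form $\Theta$ to the function $(x_1'',\dots,x_N'',y'')\mapsto\overline{\Theta}(x_1'',\dots,x_N'',y'')$ (the weak-star continuous Arens extension in the natural order, which exists and is separately weak-star continuous on the relevant balls under the approximation-property hypotheses) embeds $\mathcal{L}(^N\X;Y')$ isometrically into $C\big(B_{X_1''}\times\cdots\times B_{X_N''}\times B_{Y''}\big)$. Here one uses that on bounded nets the iterated weak-star limits in \eqref{arens extension} are continuous in each variable separately, and that the sup of $|\overline{\Theta}|$ over the product of dual balls equals $\|\Theta\|$ by Goldstine's theorem and weak-star density. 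Now a fixed $u$ in the $(N+1)$-fold projective tensor product defines a bounded linear functional $\Theta\mapsto\langle u,\Theta\rangle$ on $\mathcal{L}(^N\X;Y')$ of norm at most $\|u\|_\pi$; transporting it through the isometric embedding and extending by Hahn--Banach to all of $C\big(\prod_j B_{X_j''}\times B_{Y''}\big)$ without increasing the norm, the Riesz representation theorem produces the desired regular Borel measure $\mu_u$ with $\|\mu_u\|\le\|u\|_\pi$ and the integral identity \eqref{ec. formula integral vectorial}. The identity holds first on the embedded subspace and that is exactly the statement, since \eqref{ec. formula integral vectorial} only quantifies over $\Psi\in\mathcal{L}(^N\X;Y')$.

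The main obstacle, and the place where the approximation-property hypothesis is genuinely used, is establishing that the Arens extension $\overline{\Psi}$ is well defined and jointly continuous enough to land in $C\big(\prod_j B_{X_j''}\times B_{Y''}\big)$, rather than merely separately continuous; without the approximation property on the duals one cannot guarantee that the various Arens extensions coincide nor that the resulting function on the product of balls is continuous, so the embedding into the $C(K)$-space could fail. I would handle this exactly as in \cite{CaLaMa12}: the separability of each $X_j'$ makes each $B_{X_j''}$ metrizable in the weak-star topology, and the approximation property lets one approximate $\Psi$ uniformly on bounded sets by finite-type multilinear forms, for which the Arens extension is manifestly weak-star continuous in each variable; passing to the uniform limit over the compact product of balls gives continuity of $\overline{\Psi}$ there. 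Once this functional-analytic point is in place, everything else is the standard Hahn--Banach/Riesz packaging described above, and the bound $\|\mu_u\|\le\|u\|_\pi$ is automatic from the norm-preserving extension.
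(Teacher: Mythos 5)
Your reduction to the $(N+1)$-fold projective tensor product and the Hahn--Banach/Riesz packaging are fine as far as they go, but the core step is wrong: the map $\Theta\mapsto\overline{\Theta}$ does \emph{not} embed $\mathcal{L}(^N\X;Y')$ into $C\big(B_{X_1''}\times\cdots\times B_{X_N''}\times B_{Y''}\big)$, because the Arens extension of a general multilinear form is not weak-star continuous on the product of bidual balls, not even separately in every variable. The justification you offer --- that the approximation property allows one to approximate $\Psi$ \emph{uniformly on bounded sets} by finite-type forms --- is false: that would say every continuous multilinear form is approximable, whereas finite-type bilinear forms on $X\times Y$ correspond to finite-rank operators $X\to Y'$ and their uniform closure lies in the compact ones. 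Concretely, take $X_1=X_2=\ell_2$ (which satisfies the hypotheses of the lemma, having separable dual with the approximation property) and $\Phi(x,y)=\sum_i x(i)y(i)$. Its unique Arens extension is $\Phi$ itself, and it is not weakly continuous on $B_{\ell_2}\times B_{\ell_2}$, since $e_n\rightharpoonup 0$ while $\Phi(e_n,e_n)=1$. So your ``isometric embedding into $C(K)$'' fails, and with it the way you apply Hahn--Banach and Riesz to arbitrary $\Psi$. (Incidentally, the parenthetical claim that the approximation property forces the various Arens extensions to coincide is also not true, and is not needed.)

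What the hypotheses actually provide, and what the paper uses, is a bounded sequence of finite-rank operators $T^j_n$ on each $X_j$ with $T^j_n\to Id_{X_j}$ and $(T^j_n)'\to Id_{X_j'}$ in the strong operator topology. Setting $\Psi_{n_1,\ldots,n_N}=\Psi\circ(T^1_{n_1},\ldots,T^N_{n_N})$, one shows that $\overline{\Psi}(x_1'',\dots,x_N'')(y'')$ is the \emph{iterated pointwise limit} of the extensions $\overline{\Psi_{n_1,\ldots,n_N}}(x_1'',\dots,x_N'')(y'')$ on the (metrizable, by separability of the duals) compact balls; since the approximants are finite type, their extensions are weak-star continuous, so $\overline{\Psi}$ is bounded and Borel measurable --- but in general not continuous. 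The measure $\mu_u$ is then obtained essentially as in your second paragraph, but only using the finite-type forms, whose extensions do lie in the $C(K)$ space with supremum norm equal to the multilinear norm; Riesz (after Hahn--Banach) gives $\Vert\mu_u\Vert\leq\Vert u\Vert_\pi$ and the identity \eqref{ec. formula integral vectorial} for finite-type mappings. The step your proposal is missing, and which cannot be bypassed by a continuity claim, is the passage from finite-type to arbitrary $\Psi$: this uses the iterated-limit representation together with the dominated convergence theorem on the $\mu_u$-side, and the convergence $\left\langle u,\Psi_{n_1,\ldots,n_N}\right\rangle\to\left\langle u,\Psi\right\rangle$ on the tensor side (checked first for finite sums of elementary tensors and then by density and uniform boundedness).
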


\begin{proof} Since for each $j=1,\dots,N$, the space $X_j$ has separable dual with approximation property, a combination of Proposition~3.5 and Theorem~3.10 of \cite{HandbookI} ensures that
there exists a bounded sequence of finite
rank operators $(T^j_n)_n$ on  $X_j$ such that both  $T^j_n\longrightarrow
Id_{X_j}$ and $(T^j_n)'\longrightarrow Id_{X_j'}$ in the strong operator
topology.

Now, given $\Psi \in \mathcal{L}(^N\X;Y')$, for each $(n_1,\dots,n_N)\in\N^N$
we define the finite type multilinear mapping
$$
\Psi_{n_1,\ldots,n_N} = \Psi \circ(T^1_{n_1},\ldots,T^N_{n_N}),
$$
and, proceeding as in \cite[Lemma 2.1]{CaLaMa12}, we see that the Arens extension of $\Psi$ is given
by
\begin{equation} \label{limite de medibles}
\overline{\Psi}(x_1'', \dots, x_N'')(y'') = \lim_{n_1 \rightarrow
\infty} \ldots \lim_{n_N \rightarrow \infty}
\overline{\Psi_{n_1,\ldots,n_N}}(x_1'', \dots, x_N'')(y'').
\end{equation}
By the Riesz representation theorem for
$C\big( (B_{X_1''},w^*)\times \cdots \times (B_{X_N''},w^*) \times (B_{Y''},w^*)\big)$, there is
a regular Borel measure $\mu$ satisfying \eqref{ec. formula integral vectorial} for finite type multilinear mappings.
Then, by \eqref{limite de medibles}, the Dominated convergence theorem and the density of
linear combination of elementary tensors we obtain  \eqref{ec. formula integral vectorial}
for every continuous $N$-linear mapping.
\end{proof}

\begin{lemma}\label{de escalar a beta}
Suppose that $Z$ has property $(\beta)$. If the Lindenstrauss theorem holds for $\mathcal{L}_s(^NX)$, then it also holds for $\mathcal{L}_s(^NX;Z)$.
\end{lemma}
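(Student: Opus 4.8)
The plan is to reduce the vector-valued statement to the (assumed) scalar one by ``slicing'' $Z$ with the system $\{(z_\alpha,g_\alpha)\}_{\alpha\in\Lambda}$ provided by property $(\beta)$, and then re-assembling the scalar approximations with the vectors $z_\alpha$. Fix $\Phi\in\mathcal{L}_s(^NX;Z)$; scaling, we may assume $\|\Phi\|=1$, and let $\eps>0$. For each $\alpha$ the form $g_\alpha\circ\Phi$ is symmetric (because $\Phi$ is and $g_\alpha$ is linear), hence $g_\alpha\circ\Phi\in\mathcal{L}_s(^NX)$, and condition (iii) in the definition of property $(\beta)$ gives
\begin{equation*}
\|\Phi\|=\sup_{\x\in B_X^N}\ \sup_{\alpha\in\Lambda}\,|g_\alpha(\Phi(\x))|=\sup_{\alpha\in\Lambda}\|g_\alpha\circ\Phi\|.
\end{equation*}
So I can fix an index $\alpha_0$ with $\|g_{\alpha_0}\circ\Phi\|$ as close to $1$ as needed.

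The delicate step --- and the only place where the strict bound $\lambda<1$ is genuinely used --- is a preliminary normalization. Starting from $\Phi$ I would carry out a two-step perturbation: first rescale the $\alpha_0$-component, replacing $\Phi$ by $\Phi+c\,(g_{\alpha_0}\circ\Phi)(\cdot)\,z_{\alpha_0}$ so that the $\alpha_0$-slice has norm exactly $1$; and then replace the resulting map $\Phi_1$ by a convex combination $(1-\kappa)\Phi_1+\kappa\,(g_{\alpha_0}\circ\Phi_1)(\cdot)\,z_{\alpha_0}$. Since $|g_\gamma(z_{\alpha_0})|\le\lambda<1$ for $\gamma\neq\alpha_0$, both perturbations keep the new map symmetric, move it only $O(\eps)$ in the supremum norm, and (choosing $\kappa$ in terms of $\lambda$) can be arranged so that afterwards
\begin{equation*}
\|g_{\alpha_0}\circ\Phi\|=\|\Phi\|=1\qquad\text{and}\qquad \sup_{\gamma\neq\alpha_0}\|g_\gamma\circ\Phi\|\le 1-\rho
\end{equation*}
for a fixed $\rho>0$; that is, the $\alpha_0$-coordinate is strictly norm-dominant.

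Now I would apply the hypothesis to the scalar symmetric form $h:=g_{\alpha_0}\circ\Phi$: there is $g\in\mathcal{L}_s(^NX)$ with $\|g-h\|<\delta$ (for any prescribed small $\delta$) all of whose Arens extensions attain their norm at one and the same $N$-tuple $\b''\in B_{X''}^N$; in particular $\|g\|>1-\delta$. Set
\begin{equation*}
\Psi(\x):=\Phi(\x)+\big(g(\x)-h(\x)\big)\,z_{\alpha_0}\ \in\ \mathcal{L}_s(^NX;Z).
\end{equation*}
Then $\Psi$ is symmetric, $\|\Psi-\Phi\|\le\|g-h\|<\delta$, and since $g_{\alpha_0}(z_{\alpha_0})=1$ one gets $g_{\alpha_0}\circ\Psi=g$, while $\|g_\gamma\circ\Psi\|\le\|g_\gamma\circ\Phi\|+\lambda\delta\le 1-\rho+\lambda\delta$ for $\gamma\neq\alpha_0$. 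Taking $\delta$ small (in terms of $\rho$ and $\lambda$) keeps the $\alpha_0$-coordinate dominant, so $\|\Psi\|=\sup_\gamma\|g_\gamma\circ\Psi\|=\|g\|$.

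To conclude, fix any one of the $N!$ Arens extensions $\ov{\Psi}$ of $\Psi$. Since $g_{\alpha_0}\in Z'$ induces a $w^*$-continuous functional of norm $1$ on $Z''$, it commutes with the iterated weak-star limits of \eqref{arens extension}; hence $g_{\alpha_0}\circ\ov{\Psi}=\ov{g_{\alpha_0}\circ\Psi}=\ov{g}$, the corresponding Arens extension of $g$, which attains its norm $\|g\|$ at $\b''$. Using that Arens extensions are isometric ($\|\ov{\Psi}\|=\|\Psi\|=\|g\|$), we obtain
\begin{equation*}
\|g\|=|\ov{g}(\b'')|=|g_{\alpha_0}(\ov{\Psi}(\b''))|\le\|\ov{\Psi}(\b'')\|\le\|\ov{\Psi}\|=\|g\|,
\end{equation*}
so $\ov{\Psi}$ attains its norm at $\b''$. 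Since this $\b''$ is common to all Arens extensions of $g$ and the argument is valid for every ordering of the variables, all Arens extensions of $\Psi$ attain the supremum norm at the same $N$-tuple, and a careful bookkeeping of the perturbation parameters gives $\|\Phi-\Psi\|<\eps$. The main obstacle is the preliminary normalization: without forcing the $\alpha_0$-coordinate to be strictly dominant one only gets that $\ov{\Psi}$ \emph{almost} attains its norm at $\b''$, and upgrading ``almost'' to ``exactly'' is precisely what property $(\beta)$ --- through the strict inequality $\lambda<1$ --- provides, following the scheme of \cite{ChoiKim96}.
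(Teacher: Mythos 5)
Your argument is correct and is essentially the intended one: the paper omits this proof, referring to the property-$(\beta)$ transfer scheme of \cite{ChoiKim96} (and \cite{CaMa}), which is exactly what you carry out --- slice $\Phi$ with the functionals $g_\alpha$, make the $\alpha_0$-coordinate strictly dominant using $\lambda<1$, approximate the scalar slice by the hypothesis, patch back with $z_{\alpha_0}$, and transfer norm attainment of all Arens extensions via $w^*$-continuity of evaluation at $g_{\alpha_0}$ together with $\|\overline{\Psi}\|=\|\Psi\|$. Your only deviation (normalizing $\Phi$ beforehand instead of boosting the $\alpha_0$-slice after the scalar approximation) is a harmless variant of the same idea, and the asserted parameter estimates do work out.
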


We denote by $\tilde{\otimes}_{\pi}^{N,s}X$ the $N$-fold symmetric tensor product of $X$ endowed with the (full, not symmetric) projective tensor norm $\pi$. We refer to \cite{Flo97} for general theory on
symmetric tensor products.

\begin{proof}[Proof of Theorem~\ref{Lind simetrico}] Suppose first that $Z$ is a dual space, say $Z=Y'$. Take $\Phi \in \mathcal{L}_s(^NX;Y')$
 and $\varepsilon>0$, and
consider the associated linear functional $$L_{\Phi} \in \left(
(\tilde{\otimes}_{\pi}^{N,s}X) \tilde{\otimes}_{\pi} Y
\right)'.$$ By the Bishop-Phelps theorem, there exists  a norm attaining
functional $L=L_{\Psi}$ such that $\Vert \Phi - \Psi \Vert=\Vert L_{\Phi} - L_{\Psi} \Vert <
\varepsilon$, for some $\Psi \in \mathcal{L}_s(^NX;Y')$.
Take $u \in  (\tilde{\otimes}_{\pi}^{N,s}X)
\tilde{\otimes}_{\pi} Y $ with $\Vert u\Vert_{\pi}=1$ such that
$\vert L_{\Psi}(u)\vert = \Vert L_{\Psi}\Vert = \Vert \Psi\Vert$. By Lemma~\ref{formula integral}, there is a regular Borel measure
$\mu_{u}$ satisfying~\eqref{ec. formula integral vectorial}
and hence,
\begin{eqnarray*}
\Vert \Psi\Vert = \vert L_{\Psi}(u)\vert \leq \int_{B_{X''}\times
\cdots \times B_{X''} \times B_{Y''}} \vert\overline{\Psi}(x_1'', \dots,
x_N'')(y'')\vert d\vert\mu_u\vert(x_1'',\dots,x_N'',y'') \leq \Vert
\overline{\Psi}\Vert \Vert \mu_{u}\Vert \leq \Vert \Psi\Vert.
\end{eqnarray*}
As a consequence,
$\vert\overline{\Psi}(x_1'',\dots,x_N'')(y'')\vert = \Vert
\Psi\Vert$ almost everywhere (for $\mu_{u}$) and
$\overline{\Psi}$ attains its norm.
Note that changing the order of the iterated limits in \eqref{limite de medibles}, we obtain the $N!$ Arens extensions of $\Psi$. Then, any of these extensions attains its norm almost everywhere for $\mu_u$. In particular, there exists a $N$-tuple on which all the Arens extensions of $\Psi$ attain their norms simultaneously.

What we have just proved, implies the Lindenstrauss theorem in the scalar-valued case. Then, Lemma~\ref{de escalar a beta} gives the result for $Z$ with property $(\beta)$.
\end{proof}

The following proposition gives the converse of \cite[Theorem~2.1]{ChoiKim96}. For our purposes, we only state and prove the result concerning symmetric multilinear mappings. Nevertheless, with almost identical proof, the result remains valid for (non-necessarily symmetric) multilinear mappings defined on any $N$-tuple of Banach spaces $\X=X_1\times \cdots\times X_N$ and for homogeneous polynomials.

\begin{proposition} \label{equivalencia BP beta}
The Bishop-Phelps theorem holds for $\mathcal{L}_s(^NX)$ if and only if it holds for $\mathcal{L}_s(^NX;Z)$ for every (or some) Banach space $Z$ with property $(\beta)$.
\end{proposition}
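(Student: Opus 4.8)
The plan is to prove the two implications separately, using the structure already established for the Lindenstrauss case in Lemma~\ref{de escalar a beta} and the proof of Theorem~\ref{Lind simetrico}. For the implication ``$\mathcal{L}_s(^NX)$ $\Rightarrow$ $\mathcal{L}_s(^NX;Z)$ for \emph{every} $Z$ with property $(\beta)$'', I would mimic the argument of Choi and Kim \cite{ChoiKim96} (the same template used implicitly for Lemma~\ref{de escalar a beta}): given $\Phi \in \mathcal{L}_s(^NX;Z)$ and $\varepsilon>0$, write $\Phi(x_1,\dots,x_N)$ via the family $\{(z_\alpha,g_\alpha):\alpha\in\Lambda\}$ witnessing property $(\beta)$, so that $\|\Phi\|=\sup_\alpha \|g_\alpha\circ\Phi\|$. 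One perturbs using a suitable summable weighting over $\alpha$: pick $\alpha_0$ with $\|g_{\alpha_0}\circ\Phi\|$ close to $\|\Phi\|$, apply the scalar Bishop-Phelps theorem for $\mathcal{L}_s(^NX)$ to get a norm-attaining scalar symmetric form $\psi$ near $g_{\alpha_0}\circ\Phi$, and set $\Psi = \Phi + (\psi - g_{\alpha_0}\circ\Phi)\otimes z_{\alpha_0}$ (that is, $\Psi(x_1,\dots,x_N)=\Phi(x_1,\dots,x_N)+(\psi-g_{\alpha_0}\circ\Phi)(x_1,\dots,x_N)\,z_{\alpha_0}$). Then $\Psi$ is symmetric, close to $\Phi$, and one checks $\|\Psi\| = \|g_{\alpha_0}\circ\Psi\| = \|\psi\|$, attained where $\psi$ attains its norm, using that $|g_\alpha(z_{\alpha_0})|\le\lambda<1$ for $\alpha\ne\alpha_0$ together with condition~(iii). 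This requires choosing the perturbation size small enough that $\|g_{\alpha_0}\circ\Psi\|$ strictly dominates $\sup_{\alpha\ne\alpha_0}\|g_\alpha\circ\Psi\|$, which is where the $\lambda<1$ gap is used.

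For the reverse implication, it suffices to prove the contrapositive in the form: if Bishop-Phelps holds for $\mathcal{L}_s(^NX;Z)$ for \emph{some} Banach space $Z$ with property $(\beta)$, then it holds for $\mathcal{L}_s(^NX)$ (scalar-valued). Fix such a $Z$ and the witnessing family, and fix any $\alpha_0\in\Lambda$. Given a scalar symmetric form $\phi\in\mathcal{L}_s(^NX)$ and $\varepsilon>0$, consider the $Z$-valued symmetric form $\Phi = \phi\otimes z_{\alpha_0}$, i.e.\ $\Phi(x_1,\dots,x_N)=\phi(x_1,\dots,x_N)\,z_{\alpha_0}$, which has $\|\Phi\|=\|\phi\|$. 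By hypothesis there is a norm-attaining $\Psi\in\mathcal{L}_s(^NX;Z)$ with $\|\Phi-\Psi\|<\varepsilon$; compose with $g_{\alpha_0}$ to obtain the scalar symmetric form $g_{\alpha_0}\circ\Psi$, which satisfies $\|\phi - g_{\alpha_0}\circ\Psi\| = \|g_{\alpha_0}\circ(\Phi-\Psi)\| \le \|\Phi-\Psi\| < \varepsilon$. The only nontrivial point is that $g_{\alpha_0}\circ\Psi$ norm-attains; this may fail for a general functional, so instead of composing with $g_{\alpha_0}$ directly one uses a Hahn-Banach functional at the point where $\Psi$ attains its norm: if $\Psi$ attains $\|\Psi\|$ at $(a,\dots,a)$ with $\|a\|=1$, pick $g\in Z'$, $\|g\|=1$, with $g(\Psi(a,\dots,a))=\|\Psi(a,\dots,a)\|=\|\Psi\|$; then $g\circ\Psi$ is a scalar symmetric form attaining its norm at $(a,\dots,a)$, and one bounds $\|\phi - g\circ\Psi\|$ by splitting as $\|\phi - g\circ\Phi\| + \|g\circ(\Phi-\Psi)\|$, where $g\circ\Phi = g(z_{\alpha_0})\,\phi$, so the first term is $|1-g(z_{\alpha_0})|\,\|\phi\|$ — which is \emph{not} automatically small. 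To control it, I would instead argue more carefully: take $\varepsilon$ small and note $g(z_{\alpha_0})=g(\Psi(a,\dots,a))/\|\Psi\| + O(\varepsilon/\|\phi\|)$ and $\|\Psi(a,\dots,a)\|$ is within $\varepsilon$ of $\|\Phi\|=\|\phi\|$; combined with $\Phi(a,\dots,a)=\phi(a,\dots,a)z_{\alpha_0}$ being within $\varepsilon$ of $\Psi(a,\dots,a)$, one gets $g(z_{\alpha_0})$ close to $1$ provided $\|\phi\|$ is bounded below — and the case $\|\phi\|$ small is trivial since $0$ attains its norm. This book-keeping, making all the estimates uniform, is the main obstacle.

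Finally, I would assemble the two implications into the biconditional, observing that the ``for every'' direction is the stronger statement and the ``for some'' direction the weaker hypothesis, so the equivalence ``for every $\iff$ for some'' follows. I expect the whole argument to be short once the perturbation estimates in the first paragraph are pinned down; the genuinely delicate part is the uniform control of $g(z_{\alpha_0})\approx 1$ in the reverse direction, and I would isolate that as the one computational lemma worth spelling out.
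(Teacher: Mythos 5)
Your reverse implication (norm attainment in $\mathcal{L}_s(^NX;Z)$ for some $Z$ with $(\beta)$ implies it in $\mathcal{L}_s(^NX)$) is the direction the paper actually proves, and there you abandon the right idea too quickly. The paper composes with $g_{\alpha_0}$ itself and this \emph{does} work: since $g_\alpha\circ\Phi=g_\alpha(z_{\alpha_0})\,\varphi$ has norm at most $\lambda$ for $\alpha\neq\alpha_0$, any norm-attaining $\Psi$ with $\|\Psi\|=1$ and $\|\Psi-\Phi\|<\varepsilon<\lambda_0-\lambda$ satisfies $\|g_\alpha\circ\Psi\|\leq\lambda+\varepsilon<\lambda_0<1$ for all $\alpha\neq\alpha_0$; then condition (iii) of property $(\beta)$ at the attaining tuple, $1=\|\Psi(a_1,\dots,a_N)\|=\sup_\alpha|g_\alpha(\Psi(a_1,\dots,a_N))|$, forces $|g_{\alpha_0}(\Psi(a_1,\dots,a_N))|=\|g_{\alpha_0}\circ\Psi\|=1$, so $g_{\alpha_0}\circ\Psi$ is norm attaining, and $\|g_{\alpha_0}\circ\Psi-\varphi\|<\varepsilon$ comes for free because $g_{\alpha_0}(z_{\alpha_0})=1$. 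Your Hahn--Banach substitute runs into exactly the problem you flag and then do not close: from $g(\Psi(a_1,\dots,a_N))=\|\Psi\|$ one only gets $\varphi(a_1,\dots,a_N)\,g(z_{\alpha_0})\approx 1$, hence $g(z_{\alpha_0})\approx\overline{\varphi(a_1,\dots,a_N)}$, which need not be near $1$ (if $\varphi(a_1,\dots,a_N)\approx-1$ then $g\circ\Psi\approx-\varphi$). So your claim that ``one gets $g(z_{\alpha_0})$ close to $1$'' is false as stated; the argument can be repaired by a phase correction (multiply $g$, or one coordinate of the attaining tuple, by a unimodular scalar), but as written the ``book-keeping'' you defer is precisely the gap, and the paper's use of (ii)+(iii) of property $(\beta)$ makes it unnecessary. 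Also note that a symmetric multilinear map need not attain its norm on the diagonal, so the attaining point should be a general $N$-tuple, not $(a,\dots,a)$.

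For the forward implication the paper does not argue at all: it simply invokes \cite[Theorem~2.1~(ii)]{ChoiKim96}. Your sketch of that direction also has a flaw: with $\Psi=\Phi+(\psi-g_{\alpha_0}\circ\Phi)\otimes z_{\alpha_0}$ you have $\|g_{\alpha_0}\circ\Psi\|=\|\psi\|\leq\|\Phi\|+\varepsilon'$ while $\sup_{\alpha\neq\alpha_0}\|g_\alpha\circ\Psi\|$ can be as large as $\|\Phi\|$ (some other $g_\alpha\circ\Phi$ may nearly norm $\Phi$), so no choice of ``small perturbation'' yields the strict dominance you need; the standard fix is to amplify, taking $(1+\eta)\psi$ in place of $\psi$ with $\eta$ calibrated against $1-\lambda$ and the defect $\|\Phi\|-\|g_{\alpha_0}\circ\Phi\|$. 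In short: both directions of your proposal are repairable by standard devices, but neither is complete as written, and in the direction that constitutes the paper's contribution you miss the key observation that property $(\beta)$ itself guarantees norm attainment of $g_{\alpha_0}\circ\Psi$.
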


\begin{proof}
Thanks to \cite[Theorem~2.1~(ii)]{ChoiKim96}, we only have to prove one implication.
Suppose $Z$ has property $(\beta)$ and take $\{(z_\alpha, g_\alpha): \, \alpha \in \Lambda\} \subset Z \times Z'$ and $0 \leq \lambda <1$ satisfying its definition. Let $\lambda < \lambda_0 < 1$, $\varepsilon < \lambda_0 - \lambda$ and fix $\varphi \in \mathcal{L}_s(^NX)$ with $\Vert \varphi\Vert =1$. Pick any $\alpha_0 \in \Lambda$ and consider
$$
\Phi(x_1, \dots, x_N) = \varphi(x_1, \dots, x_N) z_{\alpha_0} \in \mathcal{L}_s(^NX;Z).
$$
By hypothesis there exists a $\Psi \in \mathcal{L}_s(^NX;Z)$ with $\Vert \Psi\Vert =1$, attaining its norm at some $(a_1, \dots, a_N)$ and such that $\Vert \Psi - \Phi \Vert < \varepsilon$. Then, $\Vert g_{\alpha} \circ \Psi - g_{\alpha} \circ \Phi \Vert < \varepsilon$ for all $\alpha \in \Lambda$ and consequently,
\begin{equation} \label{desigualdad galpha}
\Vert g_{\alpha} \circ \Psi \Vert \leq \varepsilon  + \Vert g_{\alpha} \circ \Phi \Vert \leq \varepsilon + \lambda < \lambda_0 \quad \text{for every $\alpha \neq \alpha_0$.}
\end{equation}
Since $$1 = \Vert \Psi(a_1, \dots, a_N)\Vert = \sup_{\alpha \in \Lambda} \vert g_{\alpha} (\Psi (a_1, \dots, a_N))\vert,$$ it follows from \eqref{desigualdad galpha} that $\vert g_{\alpha_0} \circ \Psi (a_1, \dots, a_N) \vert = \Vert g_{\alpha_0} \circ \Psi \Vert = 1$. Hence, $g_{\alpha_0} \circ \Psi$ is norm attaining and $\Vert g_{\alpha_0} \circ \Psi - \varphi \Vert < \varepsilon$. This gives the desired statement.
\end{proof}

We finish this section with some examples of spaces for which the Bishop-Phelps theorem fails, but our Lindenstrauss theorem holds. We appeal to the classical preduals of Lorentz sequence spaces recalling only their definitions. For further details on these spaces and their applications in  norm attainment problems see \cite{JP, AAP, CaLaMa12}. By an \textit{admissible sequence} we mean a decreasing sequence $w =(w_i)_{i}$ of nonnegative real numbers such that $w_1=1$, $\lim_i w_i =0$ and $\sum_i w_i = \infty$. Given an admissible sequence $w$, the predual of the Lorentz sequence space $d(w,1)$, is the space $d_*(w,1)$ of all the sequences $x$ such that
$$
\lim_{n\to \infty}\frac{\sum_{i=1}^n x^*(i)}{W(n)}=0
$$
where $x^*=(x^*(i))_i$ is the decreasing rearrangement of $x =(x(i))_i$ and $W(n)= \sum_{i=1}^n w_i$. In this space the norm is defined by
$$
\Vert x\Vert_W : = \sup_{n}\frac{\sum_{i=1}^n x^*(i)}{W(n)} < \infty.
$$

For short, we denote by $NA\mathcal{L}_s(^Nd_*(w,1); Y)$ the set of norm attaining mappings in $\mathcal{L}_s(^Nd_*(w,1); Y)$.

\begin{proposition}
Let $w$ be an admissible sequence such that $w \in \ell_r$ for some $1 <r<\infty$ and let $Z$ be a Banach space with property $(\beta)$.
\begin{enumerate}[\upshape (i)]
\item $NA\mathcal{L}_s(^Nd_*(w,1))$ is not dense in $\mathcal{L}_s(^Nd_*(w,1))$ for $N \geq r$.
\item $NA\mathcal{L}_s(^Nd_*(w,1); Z)$ is not dense in $\mathcal{L}_s(^Nd_*(w,1); Z)$ for $N \geq r$.
\item $NA\mathcal{L}_s(^Nd_*(w,1); \ell_r)$ is not dense in $\mathcal{L}_s(^Nd_*(w,1); \ell_r)$ for every $N \in \mathbb{N}$.
\end{enumerate}
On the other hand, the Lindenstrauss theorem holds in the three cases above.
\end{proposition}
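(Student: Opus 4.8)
The plan is to treat the two halves of the statement separately, since the negative part (non-density of norm-attaining mappings) and the positive part (validity of the Lindenstrauss theorem) come from completely different mechanisms. For the positive part, I would simply invoke Theorem~\ref{Lind simetrico}: the predual $d_*(w,1)$ of a Lorentz sequence space is known to have separable dual (its dual is $d(w,1)$, which is separable) with the approximation property (in fact it has a shrinking monotone basis given by the canonical vectors), and $\ell_r$ is reflexive, hence a dual space, while any $Z$ with property $(\beta)$ is covered directly. So all three cases fall under the hypotheses of Theorem~\ref{Lind simetrico}, giving the Lindenstrauss theorem at once. This half should be essentially a citation.

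For the negative part, the strategy is to exhibit, in each case, a symmetric $N$-linear form (or mapping) that cannot be approximated by norm-attaining ones. In case (i) the natural candidate is built from the functional $\sigma \in d_*(w,1)'= d(w,1)$ or rather from the symmetric form $\varphi(x_1,\dots,x_N)=\sum_i x_1(i)\cdots x_N(i)$, whose boundedness on $d_*(w,1)^N$ uses precisely $w\in\ell_r$ and $N\ge r$ (by Hölder, $\sum_i |x_1(i)\cdots x_N(i)| \le \prod\|x_j\|_W$ up to the $\ell_r$-norm of $w$, or one argues via the analogue of the classical Bishop–Phelps-failure examples in \cite{AAP,CaLaMa12}). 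The key point is that this $\varphi$ is not norm-attaining and, more strongly, lies outside the closure of $NA\mathcal{L}_s(^Nd_*(w,1))$; this is exactly the type of obstruction isolated in the references \cite{JP,AAP,CaLaMa12}, and I would cite or adapt that argument rather than redo it. Case (ii) then follows from case (i) together with Proposition~\ref{equivalencia BP beta}: if $NA\mathcal{L}_s(^Nd_*(w,1);Z)$ were dense for some (equivalently every) $Z$ with property~$(\beta)$, then by that proposition the Bishop–Phelps theorem would hold for $\mathcal{L}_s(^Nd_*(w,1))$, contradicting (i).

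Case (iii) is the most delicate and, I expect, the main obstacle, because it asserts failure of density for \emph{every} $N\in\mathbb N$, including $N$ small relative to $r$, where the scalar-valued form $\varphi$ above need not be bounded. Here the vector-valuedness must be exploited: one should take $\Phi\colon d_*(w,1)^N \to \ell_r$ defined coordinatewise by $\Phi(x_1,\dots,x_N) = \big(x_1(i)\cdots x_N(i)\big)_i$, whose boundedness into $\ell_r$ now holds for all $N$ (since $\sum_i|x_1(i)\cdots x_N(i)|^r \le \prod_j \|x_j\|_\infty^{r(N-1)}\sum_i|x_j(i)|^r$ type estimates, and the decreasing-rearrangement/Lorentz norm controls the relevant $\ell_r$-tails via $w\in\ell_r$). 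The claim to establish is that this $\Phi$ is bounded away from $NA\mathcal{L}_s(^Nd_*(w,1);\ell_r)$, and the argument should parallel the known $\ell_r$-valued counterexamples for linear operators on $d_*(w,1)$ going back to \cite{JP}: a norm-attaining perturbation would force the attaining point to have a decreasing rearrangement that is eventually ``too flat'', contradicting the defining limit condition of $d_*(w,1)$. I would state this reduction carefully and then refer to \cite{JP,AAP,CaLaMa12} for the technical core, since reproducing it in full would essentially duplicate those papers.
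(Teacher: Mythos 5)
Your proposal is correct and follows essentially the same route as the paper: the same diagonal counterexamples $\sum_i x_1(i)\cdots x_N(i)$ for (i) and $(x_1(i)\cdots x_N(i))_i \in \ell_r$ for (iii) handled by adapting \cite[Theorem 2.6]{JP} (the paper specifies that for (iii) one substitutes \cite[Lemma 4.2]{CaLaMa12} for Lemma 2.2 there), deduction of (ii) from (i) via Proposition~\ref{equivalencia BP beta}, and the positive part by direct appeal to Theorem~\ref{Lind simetrico} using that $d_*(w,1)'=d(w,1)$ is separable with the approximation property, $\ell_r$ is a dual space and $Z$ has property $(\beta)$.
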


\begin{proof} To prove (i), take the symmetric $N$-linear mapping $\Phi(x_1, \dots, x_N) = \sum_{i=1}^\infty x_1(i)\cdots x_N(i)$ and proceed as in \cite[Theorem 2.6]{JP}.
For (iii), consider $\Phi(x_1, \dots, x_N) = (x_1(i) \cdots x_N(i))_{i}$  and reason again as in \cite[Theorem 2.6]{JP}, but using \cite[Lemma 4.2]{CaLaMa12} instead of Lemma 2.2 in there. Finally, (ii) follows from (i) and Proposition \ref{equivalencia BP beta}.

The last statement is a consequence of Theorem \ref{Lind simetrico}, since $d_*(w,1)$ has separable dual with the approximation property, $\ell_r$ is a dual space and $Z$ has property $(\beta)$.
\end{proof}

\section{On quantitative versions of  Bishop-Phelps and  Lindenstrauss theorems for ideals of multilinear mappings}

In \cite{Boll} Bollob\'as proved a quantitative version of the Bishop-Phelps theorem, known nowadays as the Bishop-Phelps-Bollob\'as theorem. Roughly speaking, this result states that for any Banach space $X$, given a linear functional $\varphi \in X'$ and $\tilde{x}  \in B_X$ such that $\varphi(\tilde{x})$ is close enough to $\Vert \varphi \Vert$,  it is possible to find a linear functional $\psi \in X'$ close to $\varphi$ attaining its norm at some $a \in B_X$ close to $\tilde x$.
For linear operators, this problem was first considered  by Acosta, Aron, Garc\'{\i}a and Maestre in
\cite{AAGM} where the following result is proved: the Bishop-Phelps-Bollob\'as theorem  holds  for $\mathcal L(\ell_1; Y)$ if and only if $Y$ has the so called \emph{approximate hyperplane series property}. Also,  the authors study a quantitative version of the Lindenstrauss theorem for operators, which will be referred to as a Lindenstrauss-Bollob\'as-type theorem. Unfortunately, even this question has in general a negative answer as \cite[Example~6.3]{AAGM} shows. The study of these type of problems for multilinear mappings is initiated by Choi and Song \cite{ChoiSong}. In contrast to the positive result for $\mathcal L(\ell_1; \ell_\infty)$ from \cite{AAGM}, it is shown in \cite{ChoiSong} that there is no Bishop-Phelps-Bollob\'as theorem for bilinear forms on $\ell_1 \times \ell_1$. However, some positive results were obtained in the multilinear and polynomial contexts. For instance, when $X_1, \dots, X_N$ are uniformly convex, the Bishop-Phelps-Bollob\'as theorem holds in $\mathcal{L}(^N\X;Y)$ for any Banach space $Y$  \cite{ABGM, KimLee}. An analogous result was proved in \cite{Acoet6} for the space of $N$-homogeneous polynomials.

Now we give  the definition of the Bishop-Phelps-Bollob\'as and Lindenstrauss-Bollob\'as properties for ideals of multilinear mappings.
We denote by $S_X$ and $S_\X$ the unit spheres of a Banach space $X$ and of the $N$-tuple $\X = X_1 \times \cdots \times X_N$, where $S_{\X} = S_{X_1} \times \cdots \times S_{X_N}$ is considered with the supremum norm. We also write $\X'$ instead of $X_1' \times \cdots \times X_N'$.
\smallskip

Let $\left( \U , \| \cdot \|_{\U}\right)$ be a Banach ideal of $N$-linear mappings and $X_1, \dots, X_N, Y$ be Banach spaces. We say that $\U(\X;Y)$ has the \textit{Bishop-Phelps-Bollob\'as property} ($BPBp$) if the following is satisfied: given $\varepsilon >0$ there exist $\beta(\varepsilon)$ and $\eta(\varepsilon)$ with $\lim_{\varepsilon \rightarrow 0^+} \beta(\varepsilon) =0$ such that, if $\Phi \in \U(\X; Y)$, $\Vert \Phi \Vert=1$ and $\tilde{\x} =(\tilde{x}_j)_{j=1}^N \in S_{\X}$ satisfy $\Vert \Phi(\tilde{\x}) \Vert > 1 - \eta(\varepsilon)$, then there exist $\Psi \in \U(\X;Y)$, $\| \Psi \|=1$, and $\a=(a_j)_{j=1}^N \in S_{\X}$ such that
$$
\| \Psi(\a) \| =1, \quad \|\a - \tilde{\x}\| < \beta(\varepsilon) \quad \text{and} \quad \|\Psi - \Phi\|_{\U}< \varepsilon.
$$

It is worth mentioning that definitions of this type appear for linear operators in \cite{ABGKM, ArCaKo} where the subclasses considered are (non-necessarily closed) subspaces of $\mathcal L$ under the supremum norm. Here, taking into account the results obtained in Section \ref{Lindenstrauss en ideales}, our definition requires approximation of the multilinear mappings in $\| \cdot \|_{\U}$.

Following \cite{AAGM, CaLaMa12} we say that $\U(\X;Y)$ has the \textit{Lindenstrauss-Bollob\'as property} ($LBp$) if, with $\varepsilon, \eta$ and $\beta$ as above, given $\Phi \in \U(\X; Y)$, $\Vert \Phi \Vert=1$ and $\tilde{\x} =(\tilde{x}_j)_{j=1}^N \in S_{\X}$ satisfying $\Vert \Phi(\tilde{\x}) \Vert > 1 - \eta(\varepsilon)$, there exist $\Psi \in \U(\X;Y)$, $\| \Psi \|=1$, and $\a''=(a_j'')_{j=1}^N \in S_{\X''}$ such that $$
\| \overline{\Psi}(\a'') \| =1, \quad \|\a'' - \tilde{\x}\| < \beta(\varepsilon) \quad \text{and} \quad \|\Psi - \Phi\|_{\U}< \varepsilon.
$$

In \cite{CaLaMa12}, provided that $w \in \ell_r$ for some $1 < r < \infty$, it is shown that the $LBp$ fails for $\mathcal{L}(^Nd_*(w,1))$ if $N \geq r$ and for $\mathcal{L}(^Nd_*(w,1); \ell_r)$ for every $N \in \mathbb N$. The given counterexamples are diagonal mappings, which do not necessary belong to any multilinear ideal; for instance, these mappings are not nuclear (nor approximable) since they are not weakly sequentially continuous. Our purpose now is to show counterexamples to the $LBp$ in every ideal $\U$ of multilinear mappings. The following results and the already mentioned counterexample to the $BPBp$ for bilinear forms, will be the key for our objective.
Recall that given a Banach space $X$, a linear projection $P: X'' \rightarrow X''$ is an $L$-projection if $$\Vert x''\Vert = \Vert P(x'')\Vert + \Vert x'' - P(x'')\Vert \quad \text{for all $x'' \in X''$,}$$ and $X$ is an $L$-summand in its bidual if it is the range of an $L$-projection. Examples of spaces that are $L$-summands in their biduals are $L_1(\mu)$-spaces, preduals of von Neumann algebras and Lorentz sequence spaces $d(w,1)$.

\begin{lemma}
Let $X, Y$ be Banach spaces such that $X$ is an $L$-summand in its bidual with $L$-projection $P$. Let $S: X'' \rightarrow Y''$ be a linear operator with $\|S\|=1$, $a'' \in S_{X''}$ and $\tilde{x} \in S_X$ satisfying $\|S(a'') \| =1$ and $\|a'' - \tilde{x}\| < \beta(\varepsilon) <1$ for some $\beta(\varepsilon) \xrightarrow[\varepsilon \rightarrow 0]{}\, 0$. Then, for $a = \frac{P(a'')}{\| P(a'') \|} \in S_X$ and some $\beta'(\varepsilon) \xrightarrow[\varepsilon \rightarrow 0]{}\, 0$ we have
 $$\|S(a) \| =1 \quad \text{and} \quad \|a - \tilde{x}\| < \beta'(\varepsilon).$$
\end{lemma}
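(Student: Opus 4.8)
The plan is to work with the $L$-decomposition $X'' = J_X(X) \oplus_1 \ker P$ induced by the $L$-projection $P$, exploiting that $J_X(X)$ is exactly the range of $P$, so that $P$ acts as the identity on $J_X(X)$; in particular $P(\tilde x) = \tilde x$ under the usual identification of $X$ with $J_X(X)$. First I would decompose $a'' - \tilde x = P(a'' - \tilde x) + \big((a'' - \tilde x) - P(a'' - \tilde x)\big)$ and observe that $P(a'' - \tilde x) = P(a'') - \tilde x$ while $(a'' - \tilde x) - P(a'' - \tilde x) = a'' - P(a'')$. The defining property of an $L$-projection then yields
$$\beta(\varepsilon) > \|a'' - \tilde x\| = \|P(a'') - \tilde x\| + \|a'' - P(a'')\|,$$
so both summands are strictly smaller than $\beta(\varepsilon)$. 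Applying the same property to $a''$ gives $\|P(a'')\| = \|a''\| - \|a'' - P(a'')\| > 1 - \beta(\varepsilon) > 0$, so $a = P(a'')/\|P(a'')\|$ is a well-defined unit vector of $X$.

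Next I would bound the distance to $\tilde x$ by the triangle inequality,
$$\|a - \tilde x\| \le \Big\| \tfrac{P(a'')}{\|P(a'')\|} - P(a'') \Big\| + \|P(a'') - \tilde x\| = \big(1 - \|P(a'')\|\big) + \|P(a'') - \tilde x\| < 2\,\beta(\varepsilon),$$
so that $\beta'(\varepsilon) := 2\beta(\varepsilon)$ works and tends to $0$ as $\varepsilon \to 0$.

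For the norm attainment I would again use the $L$-decomposition, now of $S(a'')$: writing $S(a'') = S(P(a'')) + S(a'' - P(a''))$ and using $\|S\| = 1$,
$$1 = \|S(a'')\| \le \|S(P(a''))\| + \|S(a'' - P(a''))\| \le \|P(a'')\| + \|a'' - P(a'')\| = \|a''\| = 1.$$
Thus equality holds all the way through; since $\|S(P(a''))\| \le \|P(a'')\|$ and $\|S(a'' - P(a''))\| \le \|a'' - P(a'')\|$ individually, the equality of the two sums forces $\|S(P(a''))\| = \|P(a'')\|$, and dividing by $\|P(a'')\| \ne 0$ gives $\|S(a)\| = 1$, as desired.

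There is no serious obstacle here; the only points that need to be stated carefully are that $P$ restricts to the identity on the canonical copy of $X$ inside $X''$ — immediate from the hypothesis that $X$ is the range of $P$ — and the elementary squeeze observation that when a sum of two quantities, each dominated by a corresponding term, equals the sum of those terms, each of the two inequalities must in fact be an equality.
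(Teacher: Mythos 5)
Your proof is correct and follows essentially the same route as the paper's: the $L$-projection identity applied to $a''-\tilde x$ (using $P\tilde x=\tilde x$) and to $a''$, the lower bound $\|P(a'')\|>1-\beta(\varepsilon)$, and the forced equality $\|S(P(a''))\|=\|P(a'')\|$ giving $\|S(a)\|=1$. The only (harmless) cosmetic differences are your triangle-inequality estimate through $P(a'')$, which yields the slightly cleaner constant $\beta'(\varepsilon)=2\beta(\varepsilon)$ in place of the paper's $2\beta(\varepsilon)/(1-\beta(\varepsilon))$, and phrasing the norm step as an equality squeeze rather than a chained lower bound.
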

\begin{proof}
Since $\beta(\varepsilon) > \|a'' - \tilde{x}\| = \| P(a'') - \tilde{x} \| + \| (I-P)(a'') \|$, it follows that $\| P(a'') - \tilde{x} \| < \beta(\varepsilon)$ and hence $\| P(a'') \| > 1 - \beta(\varepsilon) >0$. Then, we consider $a= \frac{P(a'')}{\| P(a'') \|} \in S_X$. Noting that $$1=\| S(a'') \| = \| S(P(a'')) + S((I-P)(a'')) \| \leq \| S(P(a''))\| + \| S((I-P)(a'')) \|$$ we obtain $$\| S(P(a''))\| \geq 1 - \| S((I-P)(a'')) \| \geq 1 - \| (I-P)(a'') \| = \| P(a'') \|$$ and consequently $\|S(a)\| \geq 1$, which gives $\|S(a)\| = 1$. Now, recalling that $\| P(a'') - \tilde{x} \| < \beta(\varepsilon)$ and $\| P(a'') \| > 1 - \beta(\varepsilon)$, we have
\begin{eqnarray*}
\|a - \tilde{x}\| &=& \frac{1}{\| P(a'') \|} \Big\| P(a'') - \| P(a'') \| \,  \tilde{x} \Big\| \\
&\leq& \frac{1}{\| P(a'') \|} \left( \| P(a'') - \tilde{x} \| +  \Big\| \tilde{x} - \|P(a'') \| \, \tilde{x} \Big\| \right) \\
&<&  \frac{1}{1 - \beta(\varepsilon)} \left( \beta(\varepsilon) +  1 - \|P(a'') \| \right)\\
&<& \frac{2 \beta(\varepsilon)}{1 - \beta(\varepsilon)} = \beta'(\varepsilon) \xrightarrow[\varepsilon \rightarrow 0]{}\, 0,
\end{eqnarray*}
which gives the desired statement.
\end{proof}

\begin{proposition}Let $\left( \U , \| \cdot \|_{\U}\right)$ be a Banach ideal of $N$-linear mappings, $Y$ be any Banach space and $X_1, \dots, X_N$ be $L$-summands in their biduals. If $\U(\X;Y)$ has the $LBp$ then it has the $BPBp$.
\end{proposition}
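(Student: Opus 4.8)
The plan is to start from the $LBp$ and then move the norm-attaining point from the bidual down into $\X$ one coordinate at a time, each move being an application of the previous Lemma. Fix $\varepsilon>0$, small enough that the modulus $\beta(\varepsilon)$ furnished by the $LBp$ satisfies $\beta(\varepsilon)<1$; this is the only place smallness of $\varepsilon$ is used and is the usual harmless normalization of such moduli. Given $\Phi\in\U(\X;Y)$ with $\|\Phi\|=1$ and $\tilde{\x}=(\tilde{x}_j)_{j=1}^N\in S_{\X}$ with $\|\Phi(\tilde{\x})\|>1-\eta(\varepsilon)$, the $LBp$ produces $\Psi\in\U(\X;Y)$ with $\|\Psi\|=1$ and $\|\Psi-\Phi\|_{\U}<\varepsilon$, together with $\a''=(a_j'')_{j=1}^N\in S_{\X''}$ such that $\|\overline{\Psi}(\a'')\|=1$ and $\|\a''-\tilde{\x}\|<\beta(\varepsilon)$. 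The inequality $\|\Psi-\Phi\|_{\U}<\varepsilon$ is precisely what $BPBp$ asks for, so the entire task is to manufacture $\a=(a_j)_{j=1}^N\in S_{\X}$ with $\|\Psi(\a)\|=1$ and $\|\a-\tilde{\x}\|$ small; the conclusion will then be $BPBp$ with the same $\eta$ and with the modulus $\beta'$ given by the Lemma.

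For the first coordinate, consider the linear operator $S_1:=\overline{\Psi}(\,\cdot\,,a_2'',\dots,a_N'')\colon X_1''\to Y''$. Since Arens extensions are separately linear and norm preserving, $\|S_1\|\le\|\overline{\Psi}\|=\|\Psi\|=1$, while $\|S_1(a_1'')\|=\|\overline{\Psi}(\a'')\|=1$; hence $\|S_1\|=1$ and $S_1$ attains its norm at $a_1''\in S_{X_1''}$. As $\|a_1''-\tilde{x}_1\|\le\|\a''-\tilde{\x}\|<\beta(\varepsilon)<1$ and $X_1$ is an $L$-summand in its bidual, the Lemma yields $a_1\in S_{X_1}$ (the normalization of the image of $a_1''$ under the $L$-projection of $X_1$) with $\|S_1(a_1)\|=1$ and $\|a_1-\tilde{x}_1\|<\beta'(\varepsilon)$. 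The essential observation is now to read $S_1(a_1)$ correctly: inserting the genuine element $a_1\in X_1$ into the variable of \eqref{arens extension} whose $w^*$-limit is outermost (using the constant net at $a_1$) collapses that limit, so that $\overline{\Psi}(a_1,x_2'',\dots,x_N'')=\overline{\Psi_{a_1}}(x_2'',\dots,x_N'')$, where $\Psi_{a_1}(x_2,\dots,x_N):=\Psi(a_1,x_2,\dots,x_N)$ and $\overline{\Psi_{a_1}}$ is its Arens extension under the same ordering of the remaining variables. In particular $\|\overline{\Psi_{a_1}}(a_2'',\dots,a_N'')\|=\|S_1(a_1)\|=1$, and since $\|\Psi_{a_1}\|\le\|\Psi\|\,\|a_1\|=1$ this forces $\|\Psi_{a_1}\|=1$.

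Iterating the same step with $\Psi_{a_1}$ in place of $\Psi$ (now $X_2$ is an $L$-summand in its bidual and is the variable processed last), one obtains $a_2\in S_{X_2}$ with $\|a_2-\tilde{x}_2\|<\beta'(\varepsilon)$ and $\|\overline{\Psi_{a_1,a_2}}(a_3'',\dots,a_N'')\|=1$, and so on; at stage $k$ the relevant operator $\overline{\Psi_{a_1,\dots,a_{k-1}}}(\,\cdot\,,a_{k+1}'',\dots,a_N'')\colon X_k''\to Y''$ has norm $\le\|\Psi_{a_1,\dots,a_{k-1}}\|=1$ and attains it at $a_k''$, so the Lemma applies. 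After $N$ steps this produces $a_1,\dots,a_N$ with $a_j\in S_{X_j}$ and $\|a_j-\tilde{x}_j\|<\beta'(\varepsilon)$ for every $j$, hence $\|\a-\tilde{\x}\|<\beta'(\varepsilon)$ in the supremum norm; at the last step the operator is just the bitranspose of $\Psi_{a_1,\dots,a_{N-1}}\colon X_N\to Y$, whose value at $a_N\in X_N$ equals $\Psi(a_1,\dots,a_N)\in Y$, so $\|\Psi(\a)\|=1$. Together with $\|\Psi-\Phi\|_{\U}<\varepsilon$, this is exactly the $BPBp$ with parameters $\eta$ and $\beta'$.

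I expect the induction and the bookkeeping of norms to be routine; the one step that needs care—and, I think, the only genuine obstacle—is the identification $\overline{\Psi}(a_1,x_2'',\dots,x_N'')=\overline{\Psi_{a_1}}(x_2'',\dots,x_N'')$ together with its iterates. This rests on the fact that the extension in \eqref{arens extension} processes the variables one at a time, so that plugging a true element of $X_j$ into the slot extended last removes the corresponding limit and leaves the Arens extension of the partially evaluated lower-order mapping (with the same ordering convention on the remaining slots). Once this is in place, the Lemma is applied $N$ times with one and the same modulus $\beta'$, and the argument closes.
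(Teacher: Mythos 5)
Your argument is correct and follows essentially the same route as the paper's proof: apply the $L$-projection lemma coordinate by coordinate to the operators obtained by freezing the remaining slots of $\overline{\Psi}$ at $a_j''$ (resp. at the already-produced $a_j$), replacing each $a_j''$ by $P_j(a_j'')/\|P_j(a_j'')\|$. Your extra care in identifying $\overline{\Psi}(a_1,\cdot,\dots,\cdot)$ with the Arens extension of the partially evaluated mapping is only a more explicit version of what the paper leaves implicit in its notation.
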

\begin{proof}
Call $P_1,\dots,P_N$ to the corresponding $L$-projections. Let $\varepsilon$, $\eta(\varepsilon)$ and $\beta(\varepsilon)$ be as in the definition of $LBp$, with $\varepsilon$ sufficiently small such that $\beta(\varepsilon) < 1$. Take $\Phi \in \U(\X; Y)$, $\Vert \Phi \Vert=1$ and $\tilde{\x} =(\tilde{x}_j)_{j=1}^N \in S_{\X}$ such that $\Vert \Phi(\tilde{\x}) \Vert > 1 - \eta(\varepsilon)$. By hypothesis, there exist $\Psi \in \U(\X;Y)$, $\| \Psi \|=1$, and $\a''=(a_j'')_{j=1}^N \in S_{\X''}$ satisfying $$\| \overline{\Psi}(\a'') \| =1, \quad \|\a'' - \tilde{\x}\| < \beta(\varepsilon) \quad \text{and} \quad \|\Psi - \Phi\|_{\U}< \varepsilon.$$
Consider $S_1: X_1'' \rightarrow Y''$ defined by $S_1(x_1'') = \Psi(x_1'', a_2'', \dots, a_N'')$. By the previous lemma $$\left\|S_1\left( \frac{P_1(a_1'')}{\| P_1(a_1'') \|} \right) \right\| =1 \quad \text{and} \quad \left\|\frac{P_1(a_1'')}{\| P_1(a_1'') \|} - \tilde{x}_1 \right\| < \beta'(\varepsilon)$$ for some $\beta'(\varepsilon) \xrightarrow[\varepsilon \rightarrow 0]{}\, 0$. Now taking $S_2: X_2'' \rightarrow Y''$ defined by $S_2(x_2'') = \Psi\left( \frac{P_1(a_1'')}{\| P_1(a_1'') \|}, x_2'', a_3'', \dots, a_N'' \right)$ and reasoning again with the previous lemma, we obtain $$\left\|\Psi\left( \frac{P_1(a_1'')}{\| P_1(a_1'') \|}, \frac{P_2(a_2'')}{\| P_2(a_2'') \|}, a_3'', \dots, a_N'' \right) \right\| =1 \quad \text{and} \quad \left\|\frac{P_i(a_i'')}{\| P_i(a_i'') \|} - \tilde{x}_i \right\| < \beta'(\varepsilon), \quad \text{$i=1,2$.}$$ Inductively, if we write  $\a=\left( \frac{P_1(a_1'')}{\| P_1(a_1'') \|}, \dots, \frac{P_N(a_N'')}{\| P_N(a_N'') \|} \right) $, we get
$$\left\Vert \Psi(\a) \right\Vert =1 \quad \text{and} \quad \left\Vert \a - \tilde{\x} \right\Vert < \beta'(\varepsilon) \xrightarrow[\varepsilon \rightarrow 0]{}\, 0 , $$ which gives the desired statement.
\end{proof}

Since $BPBp$ trivially implies $LBp$, the previous proposition gives the equivalence when the domain spaces are $L$-summands in their biduals. In view of this equivalence, in order to show that the $LBp$ fails for multilinear mappings on $\ell_1 \times \cdots \times \ell_1$, it suffices to see that the $BPBp$ fails.
We slightly modify the counterexample given in \cite{ChoiSong} for bilinear forms, to obtain finite type multilinear mappings that serve as counterexamples to the $BPBp$ for any ideal.
Take $n \in \mathbb N$ and define $T\colon \ell_1 \times \ell_1 \rightarrow \mathbb K$ by
$$
T(x_1,x_2) = \sum_{i,j=1}^{2n^2}x_1(i)x_2(j) (1 - \delta_{ij}) \quad \text{for $\delta_{ij}$ the Kronecker delta.}
$$
Then $\| T\|=1$. Let $\tilde{\x}=(\tilde{x}_1, \tilde{x}_2) \in S_{\ell_1} \times S_{\ell_1}$ where $\tilde{x}_1(i)= \tilde{x}_2(i)=\frac{1}{2n^2}$ for $1 \leq i \leq 2n^2$ and $\tilde{x}_1(i)= \tilde{x}_2(i)= 0$ otherwise and note that $T(\tilde{\x}) = 1 - \frac{1}{2n^2}$. Now, suppose that we can find a norm attaining operator $S \in \mathcal{L}(^2\ell_1 \times \ell_1)$ with $\| S \|=1$ such that
$\| T - S \| < 1$ and take any $\a = (a_1, a_2) \in S_{\ell_1} \times S_{\ell_1}$ with $\vert S(\a) \vert =1$.
Following the calculations in the proof of \cite[Theorem~2]{ChoiSong}, we see that $\| T - S \| < 1$ implies $\|\a - \tilde{\x}\| \geq \frac{1}{2}$. Since $T$ is a finite type bilinear form, this shows that the $BPBp$ fails for any ideal of bilinear forms. Finally, for any Banach space $Y$ and any $y_0 \in Y$ with $\|y_0\|=1$, we can define the finite type $N$-linear mapping $\Phi : \ell_1 \times \cdots \times \ell_1 \rightarrow Y$ by $\Phi(x_1, \dots, x_N) = T(x_1, x_2) e_3'(x_3) \cdots e_N'(x_N) y_0$, where $T$ is defined as above and $(e_i')_{i \in \mathbb N}$ is the dual basic sequence of the canonical vectors.
This gives the desired counterexample to the $BPBp$, and hence to the $LBp$, for any ideal of $N$-linear mappings. To summarize, we have proved the following.

\begin{proposition} \label{contraejemplo LBp en todo ideal}
Let $\left( \U , \| \cdot \|_{\U}\right)$ be a Banach ideal of $N$-linear mappings with $N \geq 2$ and $Y$ any Banach space. Then the $LBp$ fails for $\U(\ell_1 \times \cdots \times \ell_1; Y)$.
\end{proposition}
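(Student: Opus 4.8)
The plan is to combine the equivalence of the $LBp$ and the $BPBp$ over well-behaved domains with a finite type counterexample to the $BPBp$. Since $\ell_1=L_1(\mu)$ is an $L$-summand in its bidual, the preceding proposition, together with the trivial implication $BPBp\Rightarrow LBp$, shows that for every Banach ideal $\U$ of $N$-linear mappings and every Banach space $Y$, the $LBp$ for $\U(\ell_1\times\cdots\times\ell_1;Y)$ is equivalent to the $BPBp$ for the same space. So it suffices to refute the $BPBp$. To get a refutation valid for \emph{every} ideal $\U$ at once, I would use finite type mappings: the scalar multiplication $(z_1,\dots,z_N)\mapsto z_1\cdots z_N$ lies in $\U(\mathbb C^N;\mathbb C)$ by axiom (iii), so composing it with functionals on the domain and a rank-one operator into $Y$ produces, via axiom (ii), a finite type element of $\U(\X;Y)$; finite sums stay in the linear subspace $\U(\X;Y)$, and since $\|\cdot\|\le\|\cdot\|_{\U}$ on any ideal, $\|\cdot\|_{\U}$-approximation is at least as hard as $\|\cdot\|$-approximation.

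For $N=2$ and scalar values, I would take the (modified Choi--Song) form
\[
T(x_1,x_2)=\sum_{i,j=1}^{2n^2}x_1(i)x_2(j)(1-\delta_{ij}),\qquad n\in\mathbb N,
\]
which is finite type with $\|T\|=1$, and the test point $\tilde{\x}=(\tilde x_1,\tilde x_2)$ with $\tilde x_1(i)=\tilde x_2(i)=\frac1{2n^2}$ for $i\le 2n^2$ and $0$ otherwise, so $T(\tilde{\x})=1-\frac1{2n^2}$. The crux, imported from the proof of \cite[Theorem~2]{ChoiSong}, is a rigidity statement: if $S\in\mathcal L(^2\ell_1\times\ell_1)$ has $\|S\|=1$ and $\|T-S\|<1$, then every $\a\in S_{\ell_1}\times S_{\ell_1}$ with $|S(\a)|=1$ satisfies $\|\a-\tilde{\x}\|\ge\frac12$, the point being that an exact norm-attaining tuple of such an $S$ concentrates on few coordinates whereas $\tilde{\x}$ is evenly spread over $2n^2$ of them, so the bound $\frac12$ does not shrink with $\|T-S\|$. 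Thus the $BPBp$ fails: given candidate functions $\eta(\varepsilon),\beta(\varepsilon)\to 0$, pick $\varepsilon<1$ with $\beta(\varepsilon)<\frac12$ and then $n$ with $\frac1{2n^2}<\eta(\varepsilon)$; any $\Psi$ with $\|\Psi\|=1$, $\|\Psi-T\|_{\U}<\varepsilon$ attaining its norm at $\a$ would then satisfy $\|\a-\tilde{\x}\|\ge\frac12>\beta(\varepsilon)$, a contradiction. This is the only genuine computation.

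The extension to general $N\ge2$ and arbitrary $Y$ is routine. Fixing $y_0\in S_Y$ and the coordinate functionals $(e_i')$ of $\ell_1$, set
\[
\Phi(x_1,\dots,x_N)=T(x_1,x_2)\,e_3'(x_3)\cdots e_N'(x_N)\,y_0,
\]
still of finite type, with $\|\Phi\|=1$, and test it at $\tilde{\x}=(\tilde x_1,\tilde x_2,e_3,\dots,e_N)\in S_{\X}$, where $\|\Phi(\tilde{\x})\|=1-\frac1{2n^2}$. If the $BPBp$ held, a near-miss at this point would yield $\Psi$ with $\|\Psi\|=1$, $\|\Psi-\Phi\|_{\U}<\varepsilon$ and $\|\Psi(\a)\|=1$ for some $\a\in S_{\X}$ with $\|\a-\tilde{\x}\|<\beta(\varepsilon)$; then $\|\Phi(\a)\|>1-\varepsilon$ forces $|T(a_1,a_2)|>1-\varepsilon$ and, choosing $g\in S_{Y'}$ with $g(\Psi(\a))=\|\Psi(\a)\|$, the scalar bilinear form $(x_1,x_2)\mapsto g(\Psi(x_1,x_2,a_3,\dots,a_N))$ has norm $1$, is attained at $(a_1,a_2)$, and lies within $O(\varepsilon)$ of a unimodular multiple of $T$; since the Choi--Song estimate is unaffected by unimodular multiples of $T$, for $\varepsilon$ small it gives $\|(a_1,a_2)-(\tilde x_1,\tilde x_2)\|\ge\frac12$, hence $\|\a-\tilde{\x}\|\ge\frac12>\beta(\varepsilon)$, the same contradiction. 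Since $\Phi\in\U(\ell_1\times\cdots\times\ell_1;Y)$ for every ideal, the $BPBp$ — and therefore the $LBp$ — fails in all of them, which is the claim. I expect the only delicate points to be a clean writeup of the Choi--Song concentration estimate and of the scalarization reducing the vector-valued $N$-linear case to the bilinear one; the rest is bookkeeping.
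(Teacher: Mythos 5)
Your proposal is correct and follows essentially the same route as the paper: the $L$-summand equivalence reducing the $LBp$ to the $BPBp$ on $\ell_1\times\cdots\times\ell_1$, the modified Choi--Song finite type bilinear form with its rigidity estimate $\|\a-\tilde{\x}\|\geq \tfrac12$, and the passage to vector-valued $N$-linear mappings by multiplying with coordinate functionals and a norm-one vector $y_0\in Y$. The extra details you supply (the ideal axioms guaranteeing that finite type mappings lie in $\U$ with $\|\cdot\|\leq\|\cdot\|_{\U}$, and the scalarization via $g\in S_{Y'}$ reducing to the bilinear scalar case) simply make explicit steps the paper leaves implicit.
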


Now, we give a positive Bishop-Phelps-Bollob\'as-type result for ideals of multilinear mappings. In \cite[Theorem 3.1]{KimLee} the authors prove that if $X$ is uniformly convex then $\mathcal{L}(X;Y)$ has the $BPBp$ for any Banach space $Y$. Analogous results were proved in \cite[Theorem 2.2]{ABGM} for multilinear mappings and in \cite[Theorem 3.1]{Acoet6} for homogeneous polynomials. We adapt the ideas in \cite{Acoet6, KimLee}
to show that a weak version of the $BPBp$ holds for every ideal of multilinear mappings whenever the domain spaces are uniformly convex. We briefly sketch the proof, focusing on the \textit{ideal part}.
Recall that a Banach space $X$ is said to be \textit{uniformly convex} if given $\varepsilon >0$ there exists $0< \delta < 1$ such that $$\text{if $x_1, x_2 \in B_X$ satisfy $\frac{\Vert x_1 + x_2 \Vert}{2} > 1 - \delta$, \quad then \quad $\Vert x_1 - x_2\Vert < \varepsilon$.}$$ In that case, the modulus of convexity of $X$ is given by $$\delta_X(\varepsilon) := \inf \left\{ 1 - \frac{\Vert x_1 + x_2 \Vert}{2}: \, x_1, x_2 \in B_X, \, \Vert x_1 - x_2\Vert > \varepsilon \right\}.$$

Let $(\U, \| \cdot\|_{\U})$ be a Banach ideal of $N$-linear mappings. We say that $\U(\X;Y)$ has the \emph{weak $BPBp$} if for each $\Phi \in \U(\X;Y)$, $\|\Phi\|=1$, and $\varepsilon >0$, there exist $\tilde\beta(\varepsilon,\|\Phi\|_{\U})$ and $\tilde\eta(\varepsilon,\|\Phi\|_{\U})$ \textbf{depending also on $\|\Phi\|_{\U}$} satisfying the inequalities in the definition of the $BPBp$.

Note that if $\U$ is a closed multilinear ideal (i.e.,  $\| \cdot\|_{\U} = \|\cdot\|$) the weak $BPBp$ is just the $BPBp$.

\begin{theorem} \label{BPBp unif convexo}
Let $\left( \U , \| \cdot \|_{\U}\right)$ be a Banach ideal of $N$-linear mappings, $X_1, \dots, X_N$ be uniformly convex Banach spaces and $\X = X_1 \times \cdots \times X_N$. Then $\U(\X;Y)$ has the weak $BPBp$ for every Banach space $Y$.
\end{theorem}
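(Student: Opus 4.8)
The plan is to prove the \emph{weak} $BPBp$ by perturbing $\Phi$ with a single ``rank-one'' $N$-linear mapping concentrated at the near-norming point $\tilde{\x}$ and then renormalizing, in the spirit of \cite{ABGM,KimLee,Acoet6}. The only role played by the ideal is the elementary remark that such a perturbation automatically belongs to $\U$ with a controlled ideal norm: if $\varphi_j\in S_{X_j'}$ and $z_0\in S_Y$, then $\omega(\x):=\varphi_1(x_1)\cdots\varphi_N(x_N)$ is the product form $(\lambda_1,\dots,\lambda_N)\mapsto\lambda_1\cdots\lambda_N$ composed with $(\varphi_1,\dots,\varphi_N)$ and then with the operator $\lambda\mapsto\lambda z_0$, so conditions (iii) and (ii) in the definition of a multilinear ideal yield $\omega(\cdot)z_0\in\U(\X;Y)$ with $\|\omega(\cdot)z_0\|_{\U}\le\|\varphi_1\|\cdots\|\varphi_N\|\,\|z_0\|=1$. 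The dependence of $\tilde\eta,\tilde\beta$ on $\|\Phi\|_{\U}$ will arise solely from the renormalization step.

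Concretely, given $\Phi\in\U(\X;Y)$ with $\|\Phi\|=1$ and $\tilde{\x}=(\tilde x_j)_j\in S_{\X}$ with $\|\Phi(\tilde{\x})\|>1-\tilde\eta$ (and $\tilde\eta<1$), I would choose by Hahn--Banach functionals $\varphi_j\in S_{X_j'}$ with $\varphi_j(\tilde x_j)=1$, set $z_0:=\Phi(\tilde{\x})/\|\Phi(\tilde{\x})\|\in S_Y$ (well defined) and $\omega:=\varphi_1\cdots\varphi_N$, and for a small parameter $t>0$ consider
$$\Phi_t(\x):=\Phi(\x)+t\,\omega(\x)\,z_0.$$
By the previous paragraph $\Phi_t\in\U(\X;Y)$ and $\|\Phi_t-\Phi\|_{\U}\le t$. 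Since $\omega(\tilde{\x})=1$ we get $\Phi_t(\tilde{\x})=\Phi(\tilde{\x})\big(1+t/\|\Phi(\tilde{\x})\|\big)$, hence $\|\Phi_t(\tilde{\x})\|=\|\Phi(\tilde{\x})\|+t>1-\tilde\eta+t$, while $\|\Phi_t\|\le\|\Phi\|+t=1+t$; thus $1-\tilde\eta+t\le\|\Phi_t\|\le1+t$.

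The heart of the argument is then the uniform-convexity input of \cite{KimLee,ABGM,Acoet6}: for $t$ fixed and $\tilde\eta$ sufficiently small relative to $t$ and to the moduli of convexity $\delta_{X_j}$, the perturbed mapping $\Phi_t$ attains its norm at some $\a\in S_{\X}$ with $\|\a-\tilde{\x}\|<\beta$, where $\beta\to0$ as $\tilde\eta/t\to0$. Heuristically, any maximizing net $(\x^{\alpha})$ obeys $\|\Phi_t(\x^{\alpha})\|\le1+t\prod_j|\varphi_j(x_j^{\alpha})|$, so $\prod_j|\varphi_j(x_j^{\alpha})|$ is within $O(\tilde\eta/t)$ of $1$; as each $|\varphi_j(x_j^{\alpha})|\le1$, every factor is close to $1$, and uniform convexity of $X_j$ together with $\varphi_j(\tilde x_j)=1$ forces $x_j^{\alpha}$ close to $\tilde x_j$ up to a unimodular scalar, which one absorbs by a rotation of the coordinates (this leaves $\|\Phi_t(\cdot)\|$ unchanged, since $\Phi(\mu_1 x_1,\dots,\mu_N x_N)=\big(\prod_j\mu_j\big)\Phi(\x)$ and likewise for $\omega$). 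Granting this, put $\Psi:=\Phi_t/\|\Phi_t\|\in\U(\X;Y)$, so $\|\Psi\|=1$, $\|\Psi(\a)\|=1$, $\|\a-\tilde{\x}\|<\beta$, and
$$\|\Psi-\Phi\|_{\U}\le\Big|1-\frac{1}{\|\Phi_t\|}\Big|\,\|\Phi_t\|_{\U}+t\le\frac{t}{\|\Phi_t\|}\big(\|\Phi\|_{\U}+t\big)+t\le\frac{t}{1-\tilde\eta}\big(\|\Phi\|_{\U}+t\big)+t,$$
using $|\,\|\Phi_t\|-1\,|\le t$ (valid once $\tilde\eta\le t$), $\|\Phi_t\|_{\U}\le\|\Phi\|_{\U}+t$ and $\|\Phi_t\|\ge1-\tilde\eta$. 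Given $\varepsilon>0$ one now picks $t=t(\varepsilon,\|\Phi\|_{\U})>0$ so small that the last expression is $<\varepsilon$, then $\tilde\eta=\tilde\eta(\varepsilon,\|\Phi\|_{\U})$ (e.g.\ $\tilde\eta\le t^2$) small enough for the uniform-convexity step, and reads off $\tilde\beta=\tilde\beta(\varepsilon,\|\Phi\|_{\U})$ from $\beta$; all three tend to $0$ as $\varepsilon\to0^+$ with $\|\Phi\|_{\U}$ fixed. The factor $\|\Phi_t\|_{\U}\approx\|\Phi\|_{\U}$ in the renormalization error is precisely why only the weak $BPBp$ (with $\tilde\eta,\tilde\beta$ depending on $\|\Phi\|_{\U}$) is obtained, and why for closed ideals ($\|\cdot\|_{\U}=\|\cdot\|$) the estimate degenerates and one recovers the genuine $BPBp$ of \cite{ABGM,KimLee}.

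The main obstacle is the uniform-convexity step: proving that $\Phi_t$ genuinely attains its norm, and at a point controllably close to $\tilde{\x}$. Since neither $\Phi$ nor $\Phi_t$ need be weakly sequentially continuous, weak compactness of $B_{\X}$ alone does not produce a norming point, and one has to reproduce the quantitative modulus-of-convexity estimates of \cite{KimLee,ABGM,Acoet6}. The remaining ingredients --- the Hahn--Banach choices, the rotation removing the unimodular phases, and the bookkeeping in the last display --- are routine.
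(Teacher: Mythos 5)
There is a genuine gap, and it sits exactly where you flag it: the claim that the single perturbation $\Phi_t=\Phi+t\,\omega(\cdot)z_0$ \emph{attains} its norm at some $\a\in S_{\X}$ close to $\tilde{\x}$. Uniform convexity of the $X_j$ makes them reflexive, but neither $\Phi$ nor $\Phi_t$ need be weakly (sequentially) continuous, so no compactness argument produces a maximizer; and for a \emph{specific} mapping such as $\Phi_t$ there is no density theorem to invoke. Your quantitative heuristic only shows that any near-maximizing net stays in a small ball around $\tilde{\x}$ (after unimodular rotations); such a net need not converge, so this gives localization of approximate maximizers, not attainment. The references you lean on (Kim--Lee, ABGM, Acosta et al.) do not prove the statement you need: they do not show that one perturbation attains its norm, but instead run an iteration. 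That iteration is the heart of the proof and is what the paper does: set $\Phi_1=\Phi$, $\a_1=\tilde{\x}$, and at each step perturb by $\frac{\varepsilon}{2^{k+2}}\,x_{k,1}'(\cdot)\cdots x_{k,N}'(\cdot)\,\Phi_k(\a_k)$ (norming functionals at the current point $\a_k$), renormalize to get $\Phi_{k+1}$, and choose a new \emph{near}-norming point $\a_{k+1}$ --- which always exists. Uniform convexity is used to show that consecutive points $\a_k,\a_{k+1}$ are close (via the modulus of convexity and the geometrically decreasing perturbation weights), so $(\a_k)_k$ is norm-Cauchy, while $(\Phi_k)_k$ is Cauchy in $\|\cdot\|_{\U}$; the \emph{limit} mapping attains its norm at the \emph{limit} point, which stays $\varepsilon$-close to $\tilde{\x}$. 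Your one-shot scheme skips precisely this mechanism, so as written the proof is incomplete at its central step.

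The parts of your proposal that concern the ideal are correct and coincide with the paper's: the rank-one-type perturbation $\omega(\cdot)z_0$ belongs to $\U(\X;Y)$ with $\|\omega(\cdot)z_0\|_{\U}\le 1$ by axioms (ii)--(iii), the renormalization cost is controlled by $\|\Phi_t\|_{\U}\lesssim\|\Phi\|_{\U}+1$, and this is exactly why only the weak $BPBp$ (with $\tilde\eta,\tilde\beta$ depending on $\|\Phi\|_{\U}$) is obtained, collapsing to the usual $BPBp$ for closed ideals. If you replace the single perturbation by the iterative construction (keeping your ideal-norm bookkeeping, applied at every step as in the estimates $\|\Phi_{k+1}-\Phi_k\|_{\U}\le\frac{\varepsilon}{2^{k+1}}(\|\Phi\|_{\U}+1)$), you recover the paper's argument.
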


\begin{proof}
Let $\Phi \in \U(\X;Y)$, $\|\Phi\|=1$ and $0<\varepsilon <1$. Consider $\delta(\varepsilon) = \min \{\delta_{X_1}(\varepsilon), \dots, \delta_{X_N}(\varepsilon)\}$ and $\eta(\eps)=\frac{\varepsilon}{2^4}\delta\left(\frac{\varepsilon}{2}\right)$.  Let $\tilde \x \in S_{\X}$ such that
$$
\| \Phi(\tilde \x)\| > 1 -\eta(\eps).
$$
In order to show the result, we define inductively a sequence $\left( (\a_k, \x_k', y_k', \Phi_k)\right)_{k}$ such that $\Phi_k \in \U(\X;Y)$ with $\| \Phi_k \|=1$ and $(\a_k, \x_k' , y_k' ) \in S_{\X} \times S_{\X'} \times S_{Y'}$ satisfying appropriate estimates.

Let $\Phi_1 := \Phi$, $\a_1=\tilde \x = (a_{1,j})_{j=1}^N $ and choose $\x_1' =(x_{1,j}')_{j=1}^N \in S_{\X'}$ and $y_1' \in S_{Y'}$ satisfying
$$
x_{1,j}'(a_{1,j})=1 \quad \text{for all $j=1, \dots,N$} \quad \text{and} \quad \vert y_1'(\Phi_1(\a_1)) \vert> 1 - \eta(\eps).
$$
Suppose that $(\a_k, \x_k', y_k', \Phi_k)$ was defined and satisfies
\begin{equation*}
x_{k,j}'(a_{k,j})=1 \quad \text{for all $j=1, \dots,N$} \quad \text{and} \quad \vert y_k'(\Phi_k(\a_k)) \vert> 1 - \eta\left(\frac{\eps}{2^{k-1}}\right).
\end{equation*}
Consider the auxiliary multilinear function
$$\Psi_{k+1}(\x) := \Phi_k(\x) + \frac{\varepsilon}{2^{k+2}} x_{k,1}'(x_1)\cdots x_{k,N}'(x_N) \Phi_k(\a_k) \quad (\x=(x_j)_{j=1}^N \in \X)$$
which satisfies $1 < \| \Psi_{k+1} \| \leq 1 + \frac{\varepsilon}{2^{k+2}}$. Also, $\Psi_{k+1} \in \U(\X;Y)$ since
both $\Phi_k(\cdot)$ and $x_{k,1}'(\cdot)\cdots x_{k,N}'(\cdot)\Phi_k(\a_k)$ belong to $\U(\X;Y)$.

Now, define $\Phi_{k+1}:= \frac{\Psi_{k+1}}{\| \Psi_{k+1} \|}$ and choose $\a_{k+1}\in S_\X$ and $y_{k+1}'\in S_{Y'}$ such that
\begin{equation*}
\left\vert y_{k+1}'\left(\Psi_{k+1}(\a_{k+1})\right)\right\vert > \| \Psi_{k+1}\| - \eta\left(\frac{\eps}{2^{k}}\right).
\end{equation*}
Up to multiplying the coordinates of $\a_{k+1}$ by modulus 1 complex numbers, we may assume that  $x_{k,j}'(a_{k+1,j}) = |x_{k
,j}'(a_{k+1,j})|$. Finally, choose $\x_{k+1}'$ such that $x_{k+1,j}'(a_{k+1,j}) = 1$ for all $j=1, \dots, N$, completing the $(k+1)$-element of the sequence  $\left( (\a_k, \x_k', y_k', \Phi_k)\right)_k$.

Let us see that $(\Phi_k)_{k}$ is a Cauchy sequence in $\U(\X;Y)$. First observe that $\|\Psi_{k+1}\|_{\U} \leq \|\Phi\|_{\U} + 1$ since $\|\Psi_{k+1}\|_{\U} \leq \|\Phi_k\|_{\U} + \frac{\varepsilon}{2^{k+2}}$ and

$$
\|\Phi_k\|_{\U} = \frac{\|\Psi_{k}\|_{\U}}{\|\Psi_{k}\|} \leq \frac{\|\Phi_{k-1}\|_{\U} + \frac{\varepsilon}{2^{k+1}}}{\|\Psi_{k}\|} \leq \|\Phi_{k-1}\|_{\U} + \frac{\varepsilon}{2^{k+1}}.
$$

On the one hand, we have
\begin{eqnarray} \label{cauchy 1}
\nonumber \|\Phi_{k+1} - \Psi_{k+1}\|_{\U} &=& \left\vert 1 - \| \Psi_{k+1} \| \right\vert \frac{\| \Psi_{k+1} \|_{\U}}{\|\Psi_{k+1}\|} \\
\nonumber &\leq& \left\vert 1 - \| \Psi_{k+1} \| \right\vert \| \Psi_{k+1} \|_{\U} \\
&\leq& \frac{\varepsilon}{2^{k+2}} (\|\Phi\|_{\U} + 1).
\end{eqnarray}
On the other hand,
\begin{equation} \label{cauchy 2}
\| \Psi_{k+1} - \Phi_k \|_{\U} = \left\| \frac{\varepsilon}{2^{k+2}} x_{k,1}'(\cdot)\cdots x_{k,N}'(\cdot) \Phi_k(\a_k) \right\|_{\U} \leq \frac{\varepsilon}{2^{k+2}} \leq \frac{\varepsilon}{2^{k+2}} (\|\Phi\|_{\U} + 1).
\end{equation}

Combining \eqref{cauchy 1} and \eqref{cauchy 2} we obtain
$$
\|\Phi_{k+1} - \Phi_k\|_{\U} \leq \|\Phi_{k+1} - \Psi_{k+1}\|_{\U} + \| \Psi_{k+1} - \Phi_k \|_{\U} \leq \frac{\varepsilon}{2^{k+1}} (\|\Phi\|_{\U} + 1).
$$
Hence, $(\Phi_k)_k$ is a Cauchy sequence in $\U(\X;Y)$ which converges to some $\Phi_\infty \in \U(\X;Y)$ and satisfies $\| \Phi_\infty\|=1$ and $\|\Phi_\infty - \Phi\| < \varepsilon (\|\Phi\|_{\U} + 1).$

As a consequence of the uniform convexity of each $X_j$,  $j=1,\dots, N$, the sequence $(a_{k,j})_k$ is a
Cauchy sequence in $S_{X_j}$ and converges to some element $a_{\infty,j} \in S_{X_j}$ such that $\|
a_{\infty,j} - x_{1,j} \|<\varepsilon$. Now taking $\a_\infty = (a_{\infty,j})_{j=1}^N \in S_{\X}$ we  have
$\| \a_\infty - \tilde \x \|<\varepsilon $. Also, since both sequences $(\Phi_k)_k$ and $(a_{k,j})_k$ are
convergent and $\lim_k \| \Phi_k(\a_k) \| =1$, we see that $\Phi_\infty$ is norm attaining, indeed  $\|
\Phi_\infty(\a_\infty) \|=1$. Then, $\U(\X;Y)$ has the weak $BPBp$ with $\tilde\eta(\varepsilon, \|\Phi\|_{\U})=
\eta(\varepsilon/ (1+\|\Phi\|_{\U}) )$ and  $\tilde\beta(\varepsilon, \|\Phi\|_{\U})= \varepsilon/
(1+\|\Phi\|_{\U}) $.
\end{proof}

We remark that, with a completely analogous proof, the theorem remains valid for any ideal of homogeneous polynomials.


\subsection*{Final remark.}
Related to Proposition \ref{equivalencia BP beta}, in \cite[Proposition 3.3]{Acoet6} it is shown that if $\mathcal{L}(^N\X)$ has the $BPBp$ then $\mathcal{L}(^N\X;Z)$ has the $BPBp$ for every $Z$ with property $(\beta)$. Mimicking the proof of this result, taking care of  \textit{ideal part} as we did in Theorem \ref{BPBp unif convexo}, it can be seen that the corresponding weak statement still holds for any Banach ideal $\left( \U , \| \cdot \|_{\U}\right)$ of $N$-linear mappings.
Also, looking at the proof of Proposition \ref{equivalencia BP beta}, it follows that if $\U(\X;Z)$ has the weak $BPBp$ for some $Z$ with property $(\beta)$, then $\U(\X)$ has the weak $BPBp$. Hence, we have the following.

\begin{proposition}
Let $\left( \U , \| \cdot \|_{\U}\right)$ be a Banach ideal of $N$-linear mappings. Then, $\U(\X)$ has the weak $BPBp$ if and only if $\U(\X;Z)$ has the weak $BPBp$ for every (or some) Banach space $Z$ with property $(\beta)$.
\end{proposition}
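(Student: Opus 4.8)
The plan is to establish the two one-directional implications separately and then bridge the ``every'' and ``some'' quantifiers using that $c_0$ has property $(\beta)$. Two elementary remarks about the ideal norm will be used repeatedly. First, on any normed ideal one has $\|\Phi\|\le\|\Phi\|_{\U}$: testing with the rank-one operators $\lambda\mapsto\lambda x_j$ and with a functional $y'\in S_{Y'}$ and invoking properties (ii)--(iii) of the definition of an ideal yields $|y'(\Phi(x_1,\dots,x_N))|\le\|\Phi\|_{\U}$ for $x_j\in S_{X_j}$. Second, if $Z$ has property $(\beta)$ with system $\{(z_\alpha,g_\alpha):\alpha\in\Lambda\}$ and constant $\lambda<1$, then for a fixed $\alpha_0$ the assignment $\varphi\mapsto\varphi\,z_{\alpha_0}$ (post-composition with the norm-one operator $\lambda\mapsto\lambda z_{\alpha_0}$) maps $\U(\X)$ into $\U(\X;Z)$ with $\|\varphi\,z_{\alpha_0}\|_{\U}=\|\varphi\|_{\U}$ and $\|\varphi\,z_{\alpha_0}\|=\|\varphi\|$ (here $g_{\alpha_0}(z_{\alpha_0})=1$ is used), while $\Phi\mapsto g_\alpha\circ\Phi$ maps $\U(\X;Z)$ into $\U(\X)$ with $\|g_\alpha\circ\Phi\|_{\U}\le\|\Phi\|_{\U}$.

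First I would show that if $\U(\X;Z)$ has the weak $BPBp$ for some $Z$ with property $(\beta)$, then $\U(\X)$ has the weak $BPBp$; this copies the proof of Proposition~\ref{equivalencia BP beta}, with supremum norms replaced by $\|\cdot\|_{\U}$. Given $\varphi\in\U(\X)$ with $\|\varphi\|=1$ and $\tilde\x\in S_{\X}$ with $|\varphi(\tilde\x)|>1-\tilde\eta$, fix any $\alpha_0$ and set $\Phi:=\varphi\,z_{\alpha_0}$, so $\|\Phi\|=1$, $\|\Phi\|_{\U}=\|\varphi\|_{\U}$ and $\|\Phi(\tilde\x)\|=|\varphi(\tilde\x)|>1-\tilde\eta$. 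After shrinking $\varepsilon$ so that $\varepsilon<1-\lambda$ (for larger $\varepsilon$ one uses the data at $(1-\lambda)/2$), the weak $BPBp$ of $\U(\X;Z)$ gives $\Psi\in\U(\X;Z)$ with $\|\Psi\|=1$ and $\a\in S_{\X}$ with $\|\Psi(\a)\|=1$, $\|\a-\tilde\x\|<\tilde\beta$ and $\|\Psi-\Phi\|_{\U}<\varepsilon$. For $\alpha\neq\alpha_0$ we get $\|g_\alpha\circ\Psi\|\le\|\Psi-\Phi\|+|g_\alpha(z_{\alpha_0})|<\varepsilon+\lambda<1$, so the equality $1=\|\Psi(\a)\|=\sup_{\alpha}|g_\alpha(\Psi(\a))|$ forces $|g_{\alpha_0}(\Psi(\a))|=1$. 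Hence $\psi:=g_{\alpha_0}\circ\Psi\in\U(\X)$ has norm one, attains it at $\a$, and $\|\psi-\varphi\|_{\U}=\|g_{\alpha_0}\circ(\Psi-\Phi)\|_{\U}\le\|\Psi-\Phi\|_{\U}<\varepsilon$; since $\|\Phi\|_{\U}=\|\varphi\|_{\U}$, the resulting $\tilde\beta(\cdot,\|\varphi\|_{\U})$, $\tilde\eta(\cdot,\|\varphi\|_{\U})$ are those of $\U(\X;Z)$ and still satisfy $\lim_{\varepsilon\to0}\tilde\beta=0$.

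For the converse --- if $\U(\X)$ has the weak $BPBp$ then $\U(\X;Z)$ has it for every $Z$ with property $(\beta)$ --- I would run the argument of \cite[Proposition~3.3]{Acoet6}, handling the \emph{ideal part} exactly as in Theorem~\ref{BPBp unif convexo}. Given $\Phi\in\U(\X;Z)$, $\|\Phi\|=1$, and $\tilde\x\in S_{\X}$ with $\|\Phi(\tilde\x)\|>1-\eta$, condition (iii) of $(\beta)$ provides $\alpha_0$ with $|g_{\alpha_0}(\Phi(\tilde\x))|>1-\eta$; then $\varphi:=g_{\alpha_0}\circ\Phi\in\U(\X)$ satisfies $\|\varphi\|\le1$, $\|\varphi\|_{\U}\le\|\Phi\|_{\U}$, $|\varphi(\tilde\x)|>1-\eta$, whence $1-\eta<\|\varphi\|\le1$ and (for $\eta\le1/2$) the normalization $\varphi_0:=\varphi/\|\varphi\|$ has $\|\varphi_0\|_{\U}\le2\|\Phi\|_{\U}$ and $\|\varphi_0-\varphi\|_{\U}\le\tfrac{\eta}{1-\eta}\|\Phi\|_{\U}$. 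Applying the weak $BPBp$ of $\U(\X)$ to $\varphi_0$ (with a parameter $\varepsilon'=\varepsilon'(\varepsilon)\to0$) yields $\psi\in\U(\X)$, $\|\psi\|=1$, attaining its norm at some $\a\in S_{\X}$ with $\|\a-\tilde\x\|$ small and $\|\psi-\varphi\|_{\U}$ small. One then lifts $\psi$ to a $\Psi\in\U(\X;Z)$ using the property-$(\beta)$ structure of $Z$, essentially replacing the ``$\alpha_0$-component'' of $\Phi$ by $\psi$, exactly as in \cite[Proposition~3.3]{Acoet6}; the single new point is that the perturbation $\Psi-\Phi$ is, modulo post-composition with $\lambda\mapsto\lambda z_{\alpha_0}$, the scalar mapping $\psi-\varphi$, so $\|\Psi-\Phi\|_{\U}$ is governed by $\|\psi-\varphi\|_{\U}$ and all control functions come out depending only on $\varepsilon$ and $\|\Phi\|_{\U}$.

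Finally, ``every'' trivially implies ``some'' (take $Z=c_0$, which has property $(\beta)$), and ``some'' implies $\U(\X)$ has the weak $BPBp$ by the first step, which by the second step gives ``every'', closing the equivalence. The step I expect to be the main obstacle is precisely the lifting in the converse: producing $\Psi$ of norm \emph{exactly} one that attains its norm at a point near $\tilde\x$ while keeping $\|\Psi-\Phi\|_{\U}<\varepsilon$ --- this is the delicate part of \cite[Proposition~3.3]{Acoet6}, where the separation $\lambda<1$ together with the formula $\|z\|=\sup_\alpha|g_\alpha(z)|$ is used essentially; the additional work here is only the $\|\cdot\|_{\U}$-bookkeeping and tracking the dependence of the parameters on $\|\Phi\|_{\U}$, which is routine once that argument is in place.
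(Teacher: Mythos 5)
Your proposal is correct and follows essentially the same route as the paper: the ``some'' direction by rerunning the proof of Proposition~\ref{equivalencia BP beta} with $\|\cdot\|_{\U}$ in place of the supremum norm, and the ``every'' direction by mimicking \cite[Proposition~3.3]{Acoet6} with the ideal-norm bookkeeping carried out as in Theorem~\ref{BPBp unif convexo}. Your parameter tracking (dependence on $\|\Phi\|_{\U}$, shrinking $\varepsilon<1-\lambda$) matches what the paper's remark-level argument requires.
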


\subsection*{Acknowledgements} We wish to thank our friends Domingo Garc\'ia and Manolo
Maestre for  helpful conversations and comments, also for suggesting us to consider
the Lindenstrauss
theorem for symmetric multilinear mappings.

\end{document}